\newcommand{\Z}{\mathbb{Z}}
\newcommand{\bp}{\begin{problem}}
\newcommand{\ep}{\end{problem}}
\newcommand{\ba}{\begin{answer}}
\newcommand{\ea}{\end{answer}}
\newcommand{\ben}{\renewcommand{\theenumi}{\alph{enumi}}

\renewcommand{\labelenumi}{(\theenumi)}\begin{enumerate}}
\newcommand{\een}{\end{enumerate}}
\newcommand{\Aut}{\mathrm{Aut}}
\newcommand{\Out}{\mathrm{Out}}
\newcommand{\GL}{\mathrm{GL}}
\newcommand{\st}{\mathrm{st}}
\newcommand{\lk}{\mathrm{lk}}
\newtheorem{defin}{Definition}[section]
\newtheorem{thm}[defin]{Theorem}
\newtheorem{cor}[defin]{Corollary}
\newtheorem{lem}[defin]{Lemma}
\newtheorem{prop}[defin]{Proposition}
\newtheorem{conj}[defin]{Conjecture}
\title[Cryptography with  groups]{Algorithmic Problems in right-angled Artin groups: Complexity and Applications}
\begin{document}
\date{\today}
\author[R. Flores]{Ram\'{o}n Flores}
\address{Ram\'{o}n Flores, Department of Geometry and Topology, University of Seville, Spain}
\email{ramonjflores@us.es}

\author[D. Kahrobaei]{Delaram Kahrobaei}
\address{Delaram Kahrobaei, Department of Computer Science, University of York, UK, CUNY Graduate Center, New York University, Tandon School of Engineering}
\email{dk2572@nyu.edu}

\author[T. Koberda]{Thomas Koberda}
\address{Thomas Koberda, Mathematics Department, University of Virginia}
\email{thomas.koberda@gmail.com}

\begin{abstract}
In this paper we consider several classical and novel algorithmic problems for right-angled Artin groups, some of which are closely related to graph theoretic problems, and study their computational complexity. We study these problems with a view towards applications to cryptography.
%We also bring some applications to cryptography, such as authentication, key-exchange, and secret sharing schemes.
\end{abstract}

\maketitle
%\tableofcontentso

\section{Introduction and motivation}
\label{Intro}

In this paper, we investigate various group theoretic problems in right-angled Artin group theory, with a view towards their applications to cryptography via computational complexity. Right-angled Artin groups (which in the literature are sometimes referred to as partially commutative groups, graph groups, or simply RAAGs), are central objects in geometric group theory, and have been studied from different algorithmic perspectives as outlined in some detail in Section \ref{Complexity} below. Because of their tractable algorithmic properties, they have also been proposed as possible platforms for some cryptographic schemes~\cite{WM,BMS,LP}.

Since right-angled Artin groups are uniquely defined by a finite simplicial graph and vice versa, it is clear from the outset that  there is a natural connection between algorithmic graph theoretic problems and group theoretic problems for right-angled Artin groups. Since the graph theoretic problems have been of central importance in complexity theory, it is natural to consider some of these graph theoretic problems via their equivalent formulation as group theoretic problems about right-angled Artin groups. The theme of the paper is to convert graph theoretic problems for finite graphs into group theoretic ones for right-angled Artin groups, and to investigate the graph theory algebraically. In doing so, new approaches to resolving problems in complexity theory become apparent. We are primarily motivated by the fact that some of these group theoretic problems can be used for cryptographic purposes, such as authentication schemes, secret sharing schemes, and key exchange problems. Moreover, efficient presentations of groups such as right-angled Artin groups make certain computations easier and therefore make such classes of groups more suitable for practical applications.

We now give a more detailed description of the contents of this paper. In Section \ref{RAAGs}, we offer a brief survey of right-angled Artin groups from a group-theoretic point view, and give an account of the current status of certain natural algorithmic problems which arise in this context. Section \ref{autoRAAG} contains the primary algebraic result in this paper, which translates between a graph theoretic problem and the group theory of right-angled Artin groups, and which to the knowledge of the authors is new:

\begin{thm}\label{thm:autraag}
Let $\Gamma$ be a finite simplicial graph and let $A(\Gamma)$ be the corresponding right-angled Artin group. Then $\Gamma$ admits a nontrivial automorphism if and only if $\Out(A(\Gamma))$ contains a finite nonabelian subgroup.
\end{thm}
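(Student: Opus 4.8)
The plan is to treat the two implications by quite different methods, throughout using the Servatius--Laurence generating set for $\Aut(A(\Gamma))$ --- inner automorphisms, inversions $\iota_v\colon v\mapsto v^{-1}$, graph automorphisms, dominated transvections, and partial conjugations --- together with the natural representation of $\Out(A(\Gamma))$ on $H_1(A(\Gamma);\Z)\cong\Z^n$, where $n=|V(\Gamma)|$. The forward implication is the easy one. Given a nontrivial $\sigma\in\Aut(\Gamma)$, the inversions and the graph automorphisms together generate a finite subgroup $E\cong(\Z/2)^n\rtimes\Aut(\Gamma)$ of $\Aut(A(\Gamma))$, every element of which acts on $H_1(A(\Gamma);\Z)$ by a signed permutation matrix. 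This action is faithful and inner automorphisms act trivially on $H_1$, so $E$ embeds in $\Out(A(\Gamma))$; and $E$ is nonabelian, since choosing a vertex $v$ with $w:=\sigma(v)\neq v$ gives $\sigma\iota_v\sigma^{-1}=\iota_w\neq\iota_v$. Hence $\Out(A(\Gamma))$ contains a finite nonabelian subgroup.

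For the converse I would prove the contrapositive: if $\Aut(\Gamma)=\{\id\}$ then every finite subgroup of $\Out(A(\Gamma))$ is abelian. The key combinatorial input is the standard observation that, in the absence of nontrivial graph automorphisms, the domination preorder on $V(\Gamma)$ ($v\preceq w$ iff $\lk(v)\subseteq\st(w)$) is a partial order: it is always reflexive and transitive, and if $v\preceq w\preceq v$ with $v\neq w$ a short case check --- depending on whether $v$ and $w$ are adjacent --- gives $\st(v)=\st(w)$ or $\lk(v)=\lk(w)$, so that the transposition $(v\,w)$ would be a nontrivial automorphism of $\Gamma$. Thus the directed graph recording the valid dominated transvections is acyclic, and we may linearly order $V(\Gamma)$ so that every dominated transvection acts on $H_1$ by an upper triangular elementary matrix with $1$'s on the diagonal. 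Since inner automorphisms and partial conjugations act trivially on $H_1$, inversions act by diagonal $\pm1$ matrices, and --- by Servatius--Laurence, using that $\Aut(\Gamma)$ is trivial --- $\Out(A(\Gamma))$ is generated modulo inner automorphisms by inversions, partial conjugations and dominated transvections, the homology representation $\rho\colon\Out(A(\Gamma))\to\GL_n(\Z)$ has image inside the group $B$ of integer upper triangular matrices with diagonal entries in $\{\pm1\}$. The normal subgroup $N\trianglelefteq B$ of matrices with all diagonal entries $1$ is torsion-free (a finite-order unipotent matrix over $\Q$ is the identity) and $B/N\cong(\Z/2)^n$ is abelian, so every finite subgroup of $B$ is abelian.

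What remains --- and this is the step I expect to be the real obstacle --- is to know that $\rho$ is injective on finite subgroups, i.e.\ that the Torelli subgroup $K:=\ker\rho$ is torsion-free. I would obtain this either by quoting the known torsion-freeness of the kernel of the homology representation of $\Out(A(\Gamma))$, or by a direct argument: $A(\Gamma)$ is residually torsion-free nilpotent, so each quotient $\gamma_k/\gamma_{k+1}$ is free abelian and the associated graded Lie ring is generated in degree one; a finite-order outer automorphism acting trivially on $H_1$ therefore acts trivially on every $\gamma_k/\gamma_{k+1}$ and so induces a unipotent automorphism of each torsion-free nilpotent quotient $A(\Gamma)/\gamma_{k+1}$, and since the group of unipotent outer automorphisms of a finitely generated torsion-free nilpotent group is torsion-free (it sits inside a unipotent $\Q$-group) while $A(\Gamma)$ injects into $\varprojlim_k A(\Gamma)/\gamma_{k+1}$, such an automorphism must be inner. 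Granting this, any finite $H\le\Out(A(\Gamma))$ injects via $\rho$ into $B$ and is therefore abelian, which proves the contrapositive and hence the theorem. (If a Nielsen realization statement for $\Out(A(\Gamma))$ is available --- every finite subgroup being conjugate into $E$ --- the converse follows at once, since $E$ is abelian exactly when $\Aut(\Gamma)$ is trivial.)
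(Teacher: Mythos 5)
Your proposal is correct and follows essentially the same route as the paper: the same finite subgroup generated by inversions and graph automorphisms (nonabelian via $\sigma\iota_v\sigma^{-1}=\iota_{\sigma(v)}$) for the forward direction, and for the converse the same reduction via the homology representation to upper-triangular integer matrices with $\pm1$ diagonal, using that domination is antisymmetric when $\Aut(\Gamma)$ is trivial. The one external input you flag---torsion-freeness of the kernel of $\Out(A(\Gamma))\to\GL_n(\Z)$ on finite subgroups---is exactly what the paper imports from Toinet's theorem, so quoting that result (rather than relying on your sketched lower-central-series argument, whose passage from $\Aut$ to $\Out$ is slightly delicate) matches the paper's treatment.
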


In particular, this result implies the solvability of a certain subgroup problem in the automorphism group of a right-angled Artin group, as is explained in the sequel. In Section \ref{DecProblem} we prove that the decomposition problem for right-angled Artin groups is solvable in polynomial time. Section \ref{Membership} is devoted to the membership problem, where we appeal to $3$--manifold techniques to study surface subgroups of right-angled Artin groups. Applications to cryptography are analyzed in Section \ref{crypto}, which contains a review of the current status of right-angled Artin groups in cryptography, a new sharing scheme based on the decomposition problem, and a prospective discussion of these groups as a platform for an efficient key exchange based on the membership problem for distorted surface subgroups. In Section \ref{open}, we translate some further classical algorithmic graph problems into the language of right-angled Artin groups, and we conclude with a list of open questions in the framework of cryptography and right-angled Artin groups.

\section{Background on right-angled Artin groups and complexity theory}
\label{RAAGs}
\subsection{Basic definitions and facts}

The class of groups which is our primary interest in this paper is the class of right-angled Artin groups, a class which appears to have been introduced by Hauschild and Rautenberg in~\cite{HR71} (though the closely related notion of a partially commutative monoid was
studied somewhat earlier by Cartier and Foata~\cite{CF69}).
These groups were then called \emph{semifree groups}, a term which has since fallen out of fashion. Detailed surveys about the structure and applications of these groups can be found in \cite{koberda} and \cite{charney}, whereas a general introduction to combinatorial group theory which is broadly applicable in this context can be found in~\cite{MKS}.

\begin{defin} [Right-angled Artin groups] Let $\Gamma$ be a finite simplicial graph. We write $V = V (\Gamma)$ for the finite set of vertices and $E(\Gamma) \subset V \times V$ for the set of edges, viewed as unordered pairs of vertices.  The \emph{right-angled Artin group} on $\Gamma$ is the group
$$A(\Gamma) = \langle V|[v_i, v_j] =1 \text{ whenever } (v_i, v_j) \in E \rangle.$$
In other words, $A(\Gamma)$ is generated by the vertices of $\Gamma$, and the only relations are given by commutation of adjacent vertices.
\end{defin}

The requirement that $\Gamma$ be simplicial in the definition above simply means that the diagonal of $V \times V$ is excluded from the set of edges, and only one edge is allowed between any pair of vertices. It is standard to refer to the generators $V$ of $A(\Gamma)$ with the presentation given above as \emph{vertex generators} of $A(\Gamma)$. The defining graph $\Gamma$ is an isomorphism invariant for $A(\Gamma)$ (~\cite{DromsProc87}, cf.~\cite{KMNR,KR80}), though vertex generators of a right-angled Artin group are generally not canonical.

\subsection{Complexity of the algorithmic problems for right-angled Artin groups}
\label{Complexity}
In this section we summarize the status of the complexity of some algorithmic problems in the context of right-angled Artin groups. We highlight the particular easy and hard group theoretical problems which are relevant to each point, some of which will be relevant in the sequel.

\begin{itemize}
\item The {\bf Word Problem} for right-angled Artin groups was proved to be solvable in linear time by Liu--Wrathall--Zeger~\cite{LWZ}, and the result is also true for free partially commutative monoids. Crucial use of the Viennot piling \cite{Viennot} is used in establishing these complexity results. An excellent exposition and bibliography on this topic can be found in~\cite{CGW}. A natural variation on the word problem is the
{\bf Word Choice Problem}. In this problem, one considers fixed words $a$ and $b$ in a fixed finite generating set for a group $G$, and then one takes
a third word $c$ which is known to be equal to one of $a$ or $b$. The word choice problem is to decide if $c$ is equal to $a$. It is
straightforward to see that the word choice problem for right-angled Artin groups is solvable in at most linear time (in the lengths of $a,b,c$),
since the word problem is already solvable in linear time.

\item In the same paper \cite{CGW}, Crisp--Godelle--Wiest adapt the piling method to show that the {\bf Conjugacy Problem} for right-angled Artin groups can also be solved in linear time. Moreover, they prove that the conjugacy problem remains solvable in linear time for certain distinguished subgroups of right-angled Artin groups, such as graph braid groups and certain families of word-hyperbolic and surface groups.

\item It is proved in \cite{DromsProc87} that two right-angled Artin groups are isomorphic if and only if their associated graphs are isomorphic. It follows that the {\bf Group Isomorphism Problem} for right-angled Artin groups is equivalent to the graph isomorphism problem. By a recent celebrated result of Babai~\cite{Babai15} (cf.~\cite{HBD}), the latter is solvable in quasi-polynomial time. On the other hand, Bridson~\cite{MB} has proved that there exist right-angled Artin groups such that the Isomorphism Problem is unsolvable for the class of their finitely presented subgroups. These subgroups are of course necessarily not right-angled Artin groups themselves.

\item One can formulate a restricted version of the {\bf Group Homomorphism Problem} for right-angled Artin groups, where one insists that choices for Artin generators of the source and target groups be made and that homomorphisms of groups take generators to generators.
To avoid certain trivialities, one insists furthermore that group homomorphisms take distinct commuting Artin generators to distinct commuting Artin generators.
As in the previous item, this restricted group homomorphism problem for right-angled Artin groups is equivalent to the graph homomorphism problem, as formulated in~\cite{GJ}, pages 202--203. By colorability considerations, this problem can be seen to be NP--complete whenever the image graph contains triangles (cf.~\cite{Levin}).

\item Using fundamental groups of special cube complexes (see~\cite{HaglundWise} for a definition) which are finite index subgroups of word-hyperbolic groups obtained by the Rips construction over a free group, Bridson~\cite{BM} proves that there are classes of right-angled Artin groups in which the {\bf Subgroup Isomorphism Problem} is unsolvable. The reader is also directed to~\cite{C} for a different approach to these kind of problems.

\item Bridson also constructs examples of finitely presented subgroups of right-angled Artin groups for which both the conjugacy problem and the {\bf Membership Problem} are unsolvable. They are subgroups of products of virtually special hyperbolic groups that project onto groups of type $F_3$ with unsolvable word problem. See~\cite{MB}, section 3.

\item It was proved in \cite{DKL} that computing the shortlex normal form in right-angled Artin groups can be done in polynomial time (in fact quadratic; see also \cite{HR}). As these forms are geodesic words, the {\bf Geodesic Length Problem} has polynomial complexity for these groups, as well as the {\bf Geodesic Problem} and the {\bf Bounded Geodesic Length Problem}, as these three problems can be reduced to each other in polynomial time. A good exposition of these problems can be found in (\cite{MSU2}), section 18.3.

\end{itemize}

\section{Automorphisms of graphs and right-angled Artin groups}
\label{AutProblem}

In this section, we investigate the relationship between the automorphism problem for finite simplicial graphs and automorphism groups of right-angled Artin groups. Recall that the automorphism problem for a graph $\Gamma$ (respectively for a right-angled Artin group $A(\Gamma)$) is to find a nontrivial automorphism of $\Gamma$ (respectively $A(\Gamma)$). We will prove Theorem~\ref{thm:autraag}, which we restate here for the convenience of the reader, and Corollary \ref{autoRAAG} will give the desired connection. In Section \ref{GraphGroups} we will describe the relation between different problems for graphs and right-angled Artin groups.

\begin{thm}\label{thm:raag aut}
Let $\Gamma$ be a finite simplicial graph and let $A(\Gamma)$ be the right-angled Artin group on $\Gamma$. Then the graph $\Gamma$ admits a nontrivial automorphism if and only if there exists a finite nonabelian subgroup of $\Out(A(\Gamma))$.
\end{thm}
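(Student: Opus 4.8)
The plan is to establish both implications by understanding how graph automorphisms and finite subgroups of the automorphism group of a right-angled Artin group interact. Recall the Servatius/Laurence generating set for $\Aut(A(\Gamma))$: it is generated by graph automorphisms, inversions of vertex generators, partial conjugations, and dominated (or ``transvection'') generators. The subgroup generated by graph automorphisms and inversions, call it $W \cong \Aut(\Gamma) \ltimes (\Z/2\Z)^{|V|}$, maps to a finite subgroup of $\Out(A(\Gamma))$; if $\Gamma$ has a nontrivial automorphism $\sigma$, then the subgroup of $W$ generated by $\sigma$ together with an inversion $\iota_v$ (for a suitable vertex $v$ moved by $\sigma$) is finite, and I would arrange the choice of $v$ so that this subgroup is nonabelian — typically by taking $v$ in an orbit of size $\geq 2$ under $\sigma$, so that $\sigma$ and $\iota_v$ fail to commute. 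The one subtlety for the forward direction is ensuring the resulting subgroup remains nonabelian and nontrivial after passing from $\Aut$ to $\Out$, i.e.\ that it is not killed by inner automorphisms; this is handled because inner automorphisms act trivially on the abelianization $\Z^{|V|}$ while graph automorphisms and inversions act by signed permutation matrices, so one can detect the subgroup on $H_1$.

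For the converse, suppose $\Gamma$ has no nontrivial automorphism; I want to show $\Out(A(\Gamma))$ has no finite nonabelian subgroup. The key structural input is the map $\Out(A(\Gamma)) \to \GL_{|V|}(\Z)$ obtained from the action on $H_1(A(\Gamma)) = \Z^{|V|}$, together with the analysis (due to Charney--Vogtmann and others) of torsion in $\Out(A(\Gamma))$. When $\Aut(\Gamma)$ is trivial, every finite-order outer automorphism must come from a product of inversions and certain ``commuting'' partial conjugations / transvections whose finite-order part is diagonalizable with $\pm 1$ eigenvalues on $H_1$. The upshot I would aim for is that every finite subgroup of $\Out(A(\Gamma))$ is then conjugate into the image of the diagonal group $(\Z/2\Z)^{|V|}$ of inversions — which is abelian — so no finite nonabelian subgroup can exist.

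The main obstacle is the converse direction, specifically controlling finite subgroups of $\Out(A(\Gamma))$ that are built from partial conjugations and transvections rather than from honest graph symmetries. One has to rule out, for instance, a dihedral group generated by two order-two outer automorphisms each of which individually looks ``inversion-like'' on homology but which braid with each other via the non-diagonal (partial conjugation / transvection) part of $\Aut(A(\Gamma))$. I expect to handle this by combining two facts: first, that a finite subgroup of $\Aut(A(\Gamma))$ fixing the conjugacy classes of all vertex generators and acting trivially on $H_1$ must be trivial (a rigidity statement provable by induction on $|V|$ using the structure of the relations), which forces any finite subgroup of $\Out$ to inject into a group of signed permutation matrices whose underlying permutation part lies in $\Aut(\Gamma)$; and second, that when $\Aut(\Gamma) = 1$ this signed permutation group is exactly $(\Z/2\Z)^{|V|}$, hence abelian. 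Making the first of these precise — ruling out exotic finite subgroups acting trivially on homology — is where the real work lies, and I would isolate it as a lemma before assembling the theorem.
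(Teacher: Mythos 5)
Your forward direction is essentially the paper's: a nontrivial $\sigma\in\Aut(\Gamma)$ together with an inversion $\iota_v$ at a vertex $v$ moved by $\sigma$ generates a finite nonabelian subgroup of $\Aut(A(\Gamma))$ (inside $\Aut(\Gamma)\ltimes(\Z/2\Z)^{|V|}$), and it survives in $\Out$ because it is detected faithfully on $H_1$ while inner automorphisms act trivially there. The paper packages the last step as Toinet's theorem that the kernel of $\Aut(A(\Gamma))\to\GL_n(\Z)$ is torsion-free; either route is fine. (Be aware, though, that the rigidity statement you propose to prove ``by induction on $|V|$'' is genuinely Toinet's theorem, whose known proof goes through conjugacy $p$-separability and is far from an elementary induction; you should cite it rather than plan to reprove it.)

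The converse direction has a real gap. After reducing to the image in $\GL_n(\Z)$, you assert that when $\Aut(\Gamma)=1$ every finite subgroup of that image ``injects into a group of signed permutation matrices,'' hence is conjugate into the diagonal $(\Z/2\Z)^{|V|}$. This is false as stated and, more importantly, skips the actual difficulty: the image of $\Aut(A(\Gamma))$ in $\GL_n(\Z)$ also contains the elementary matrices coming from dominated transvections, and products of inversions with transvections can have finite order without being signed permutation matrices (already in $\GL_2(\Z)$, $\left(\begin{smallmatrix}-1&1\\0&1\end{smallmatrix}\right)$ has order two). So you cannot conclude abelianness by pointing at signed permutations; you must control how the transvection part interacts with the inversions. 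The idea you are missing is the link between the \emph{domination relation} and graph automorphisms: if two distinct vertices dominate each other (more generally, if there is a domination cycle), then swapping them is a nontrivial automorphism of $\Gamma$ (Lemma~\ref{lem:dom cycle}). Hence $\Aut(\Gamma)=1$ forces the domination order to be acyclic, which lets one choose an ordering of the vertices (via a height function) under which \emph{all} dominated transvections become simultaneously strictly upper triangular on $H_1$. The image of $\Aut(A(\Gamma))$ then lands in the group of upper triangular matrices with $\pm1$ diagonal, whose unipotent part is torsion-free, so every finite subgroup injects into the abelian diagonal torus. Without the domination-cycle lemma and the resulting simultaneous triangularization, your ``exotic dihedral subgroup'' worry is not actually ruled out, and the argument does not close.
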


We remark that this result holds with the outer automorphism group replaced by the automorphism group of $A(\Gamma)$, but the statement as it is given above is stronger since for a right-angled Artin group $A(\Gamma)$, any finite subgroup of $\Aut(A(\Gamma))$ survives in the quotient $\Out(A(\Gamma))$ (see Lemma~\ref{lem:toinet} below).

To prove Theorem~\ref{thm:raag aut}, we gather some preliminary facts. If $n=|V(\Gamma)|$, there is a natural map $\phi\colon \Aut(A(\Gamma))\to \GL_n(\Z)$ given by the action of the automorphisms of $A(\Gamma)$ on the abelianization of $A(\Gamma)$. The following is a result of E. Toinet~\cite{Toinet}:

\begin{lem}\label{lem:toinet}
Let $I(\Gamma)<\Aut(A(\Gamma))$ be the subgroup inducing the identity on the abelianization of $A(\Gamma)$. Then $I(\Gamma)$ is torsion--free.
\end{lem}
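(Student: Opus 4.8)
The plan is to exploit the fact that right-angled Artin groups are residually torsion-free nilpotent, and to run the standard ``unipotent action'' argument along the lower central series. Write $G = A(\Gamma)$ and let $\gamma_1 = G \supseteq \gamma_2 = [G,G] \supseteq \gamma_3 \supseteq \cdots$ denote the lower central series, with $\gamma_{k+1} = [\gamma_k, G]$. By definition $I(\Gamma)$ is the kernel of the action of $\phi$ on $G^{ab} = \gamma_1/\gamma_2$. The two structural inputs I would cite are that $\bigcap_k \gamma_k = 1$ and that each successive quotient $\gamma_k/\gamma_{k+1}$ is free abelian, i.e. that $G$ is residually torsion-free nilpotent (Duchamp--Krob). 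Granting these, it suffices to show that no $f \in I(\Gamma)$ has prime order $p>1$, since any nontrivial torsion element has a power of prime order.

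The first key step is to observe that every $f \in I(\Gamma)$ acts trivially on each quotient $\gamma_k/\gamma_{k+1}$, and not merely on $\gamma_1/\gamma_2$. This is because the associated graded object $\bigoplus_{k \geq 1} \gamma_k/\gamma_{k+1}$ is a graded Lie ring generated in degree one by $G^{ab}$, and $f$ induces a graded Lie ring automorphism of it; an automorphism that restricts to the identity on a generating set is the identity everywhere. Hence the induced map $\mathrm{gr}(f)$ equals $\id$, so $f$ acts unipotently along the filtration.

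Now suppose for contradiction that $f \in I(\Gamma)$ has order $p$ with $f \neq \id$. Since $\bigcap_k \gamma_k = 1$ there is some $g \in G$ with $c := f(g)g^{-1} \neq 1$, and hence a unique integer $m$ with $c \in \gamma_m \setminus \gamma_{m+1}$; here $m \geq 2$ because $f$ is trivial on $G/\gamma_2$, so $c \in \gamma_2$. Working modulo $\gamma_{m+1}$, the subgroup $\gamma_m/\gamma_{m+1}$ is central in $G/\gamma_{m+1}$ and $f$ acts on it as the identity, so a short induction on $j$ gives $f^j(g) \equiv g\,c^{\,j} \pmod{\gamma_{m+1}}$, the commutator corrections lying in $[\gamma_m,\gamma_m] \subseteq \gamma_{2m} \subseteq \gamma_{m+1}$ and hence vanishing. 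Setting $j = p$ and using $f^p = \id$ yields $c^{\,p} \in \gamma_{m+1}$. But $c$ represents a nonzero element of the torsion-free group $\gamma_m/\gamma_{m+1}$, so $c^{\,p} \notin \gamma_{m+1}$, a contradiction.

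I expect the main obstacle to be one of citation rather than of argument: the entire proof rests on the residual torsion-free nilpotence of $A(\Gamma)$, and in particular on the torsion-freeness of the quotients $\gamma_m/\gamma_{m+1}$. With that input in hand the combinatorial computation is routine; the one point demanding genuine care is the bookkeeping of commutators in the inductive step, where one must verify that all error terms truly land in $\gamma_{m+1}$ so that the clean relation $f^{\,j}(g) \equiv g\,c^{\,j}$ is valid.
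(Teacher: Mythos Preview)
Your argument is correct. The paper itself does not supply a proof of this lemma at all; it simply attributes the result to Toinet and moves on. So there is nothing in the paper to compare your approach against, and what you have written is a genuine, self-contained proof rather than a reproduction of anything in the text.

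The route you take is the classical one: use residual torsion-free nilpotence of $A(\Gamma)$ (Duchamp--Krob) to get a separating lower central series with torsion-free graded quotients, observe that $I(\Gamma)$ acts trivially on the associated graded Lie ring because that ring is generated in degree one, and then derive a contradiction from a putative order-$p$ automorphism via the telescoping identity $f^{j}(g)\equiv g\,c^{\,j}\pmod{\gamma_{m+1}}$. Each step is sound. A small stylistic remark: your aside about ``commutator corrections lying in $[\gamma_m,\gamma_m]$'' is not actually needed, since the centrality of $\gamma_m/\gamma_{m+1}$ in $G/\gamma_{m+1}$ already makes the inductive computation $f^{j+1}(g)=f(g)\,f(c^{\,j})\equiv g\,c\cdot c^{\,j}$ go through without any rearrangement. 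It is harmless, just superfluous.

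For context, Toinet's original result is stronger (he shows the Torelli subgroup is residually $p$ for every prime $p$), but for the purposes of this paper only torsion-freeness is used, and your direct argument gives exactly that with minimal overhead.
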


Lemma~\ref{lem:toinet} implies that if $F<\Aut(A(\Gamma))$ is finite then $\phi$ maps $F$ isomorphically onto its image.

A \emph{transvection} is a map $\tau_{v,w}\colon V(\Gamma)\to A(\Gamma)$ which for vertices $v,w\in V(\Gamma)$ sends $v\mapsto vw$ (or $v\mapsto wv$) and which is the identity on the remaining vertices of $\Gamma$. A vertex $w$ \emph{dominates} a vertex $v$ if $\lk(v)\subset\st(w)$. It is not difficult to check that if $w$ dominates $v$ then the corresponding transvection (a \emph{dominated transvection}) extends to an automorphism of $A(\Gamma)$. It is not entirely trivial to show but it is true that dominated transvection is a transitive relation:

\begin{lem}[\cite{KMNR}; see also~\cite{Servatius89} and~\cite{CV2009}, Lemma 2.2]\label{lem:transitive}
The relation of dominated transvection is transitive.
\end{lem}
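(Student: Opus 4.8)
The plan is to work purely at the level of the combinatorics of the graph $\Gamma$, since the statement ``dominated transvection is transitive'' really means: if $w$ dominates $v$ and $u$ dominates $w$, then $u$ dominates $v$; that is, if $\lk(v)\subset\st(w)$ and $\lk(w)\subset\st(u)$, then $\lk(v)\subset\st(u)$. First I would fix notation and recall that $\st(x)=\lk(x)\cup\{x\}$, and split the analysis according to whether the vertices involved are adjacent. The key technical subtlety is that domination comes in two flavors depending on whether $v$ and $w$ are adjacent (so-called ``adjacent domination'', where $w\in\lk(v)$, versus ``non-adjacent domination''), and the transvection that gets induced — and whether it is a well-defined automorphism — is sensitive to this distinction. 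So I would organize the proof into cases based on the adjacency pattern among $u,v,w$.

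The core case is when $u,v,w$ are pairwise non-adjacent. Here I would take an arbitrary $z\in\lk(v)$ and aim to show $z\in\st(u)$. Since $\lk(v)\subset\st(w)$ and $z\neq w$ (as $w\notin\lk(v)$ by non-adjacency), we get $z\in\lk(w)$. Then $\lk(w)\subset\st(u)$ forces $z\in\st(u)$, as desired; since $z$ was arbitrary, $\lk(v)\subset\st(u)$. The point is that in the non-adjacent case the hypothesis $w\notin\lk(v)$ lets us upgrade $z\in\st(w)$ to $z\in\lk(w)$ for free, and symmetrically $v\notin\lk(w)$ — but we actually only need the first upgrade. I expect the remaining cases (where some pair among $u,v,w$ is adjacent) to be where the real care is needed: when $w\in\lk(v)$, an element $z\in\lk(v)$ might equal $w$, and then one must check $w\in\st(u)$ directly, which follows from $w\in\lk(w)$... no, $w\notin\lk(w)$, so instead one uses that adjacent domination by $w$ over $v$ forces $v$ and $w$ to have a more rigid relationship; here one typically passes to the quotient graph or uses that $v$ and $w$ then span an edge that is ``absorbed'', and the statement reduces to showing $u$ is adjacent to $v$ or dominates it via $\lk(w)$.

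The main obstacle, and the reason the lemma is ``not entirely trivial,'' is bookkeeping the interaction of adjacency with the direction of the transvections: $\tau_{v,w}$ sends $v\mapsto vw$, and composing $\tau_{v,w}$ with $\tau_{w,u}$ does not literally send $v\mapsto vwu$ unless one is careful, because applying $\tau_{w,u}$ after $\tau_{v,w}$ acts on the letter $w$ appearing in the image of $v$. One must verify that the composite is again (conjugate to, or literally) a dominated transvection $\tau_{v,u}$, possibly after also using the elementary fact that inner automorphisms and partial conjugations are available. Rather than pushing the composition through by hand, I would instead cite the purely graph-theoretic reformulation above — that $\lk(v)\subset\st(w)$ and $\lk(w)\subset\st(u)$ imply $\lk(v)\subset\st(u)$, handled by the short case analysis — and then invoke the standard dictionary (Servatius' centralizer and domination lemmas) to conclude that the corresponding transvection is a genuine automorphism. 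This reduces the ``hard part'' to the finite case check on links and stars, which is where I would spend the bulk of the written proof.
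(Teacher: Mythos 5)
The paper does not prove this lemma; it cites \cite{KMNR}, \cite{Servatius89} and \cite{CV2009}, so your argument has to stand on its own. Your reduction to the purely graph-theoretic statement --- that $\lk(v)\subset\st(w)$ and $\lk(w)\subset\st(u)$ imply $\lk(v)\subset\st(u)$ --- is exactly the right move, and your worry about composing the actual transvections $\tau_{v,w}$ and $\tau_{w,u}$ is a red herring: the paper only ever uses the lemma as transitivity of the relation $\geq$ on vertices, and the fact that a dominated transvection extends to an automorphism is handled separately. Your ``core case'' (all pairs non-adjacent) is correct, but it is also the trivial case.

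The genuine gap is the case you visibly get stuck on: $z=w$, i.e.\ $w\in\lk(v)$. You begin ``which follows from $w\in\lk(w)$\dots{} no,'' and then retreat to vague gestures (``passes to the quotient graph,'' ``the edge is absorbed'') that do not constitute an argument. The missing step is a short ping-pong between the two hypotheses. Suppose $z=w\in\lk(v)$. If $w=u$ there is nothing to prove, so assume $w\neq u$; we must show $w\in\lk(u)$. Since $v$ and $w$ are adjacent, $v\in\lk(w)\subset\st(u)$, so either $v=u$ (in which case the desired conclusion $\lk(v)\subset\st(u)=\st(v)$ is automatic) or $v\in\lk(u)$. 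In the latter case $u\in\lk(v)\subset\st(w)$, and since $u\neq w$ this forces $u\in\lk(w)$, i.e.\ $w\in\lk(u)\subset\st(u)$, as required. With this supplied, the whole proof is: for $z\in\lk(v)$ the first hypothesis gives $z\in\st(w)$, so either $z\in\lk(w)$, whence $z\in\st(u)$ by the second hypothesis, or $z=w$ and the argument just given applies; no further case analysis on the adjacency pattern of $u,v,w$ is needed.
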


If $\st(v)$ separates $\Gamma$ into components $\Gamma_1,\ldots,\Gamma_k$ for some vertex $v\in V(\Gamma)$, then the map $u\mapsto vuv^{-1}$ for $u\in V(\Gamma_1)$ and which fixes the other vertices of $\Gamma$ extends to an automorphism of $A(\Gamma)$, called a \emph{partial conjugation}.

The following result was conjectured by Servatius~\cite{Servatius89} and proved by Laurence~\cite{Laurence}:

\begin{thm}\label{thm:laurence}
The group $\Aut(A(\Gamma))$ is generated by:
\begin{enumerate}
\item
Graph automorphisms of $\Gamma$;
\item
Vertex inversions $v\mapsto v^{-1}$ for $v\in V(\Gamma)$;
\item
Dominated transvections;
\item
Partial conjugations.
\end{enumerate}
\end{thm}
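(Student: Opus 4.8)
The plan is to establish the nontrivial inclusion, namely that every $\alpha\in\Aut(A(\Gamma))$ lies in the subgroup $S$ generated by the four listed families; the reverse inclusion is exactly the content of the observations preceding the statement, where each family is seen to consist of automorphisms. The approach is a descent argument in the spirit of Whitehead's solution of the corresponding problem for free groups. Fixing a geodesic (say shortlex) normal form on $A(\Gamma)$, I would define the \emph{complexity} of $\alpha$ to be $\|\alpha\|=\sum_{v\in V(\Gamma)}\ell(\alpha(v))$, where $\ell$ is normal-form length, and proceed by induction on $\|\alpha\|$. Since each $\alpha(v)\neq 1$, the minimum possible value is $n=|V(\Gamma)|$, attained exactly when every $\alpha(v)$ is a single vertex generator up to inversion; an automorphism of complexity $n$ is then a signed permutation of the vertex generators, and the requirement that it preserve the defining relations forces the underlying permutation to be a graph automorphism. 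This identifies the base case of the induction with products of generators of types (1) and (2).

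For the inductive step I would need the structural constraints that govern where vertex generators can be sent. The key fact is the centralizer theorem of Servatius~\cite{Servatius89}: the centralizer of a vertex generator $v$ is precisely $\langle\st(v)\rangle$. An automorphism must carry this centralizer lattice to itself, so the commutation pattern among the letters occurring in the images $\alpha(v)$ is forced to reflect the adjacency structure of $\Gamma$. Two further constraints are brought to bear: the linear condition imposed by the abelianization map $\phi\colon\Aut(A(\Gamma))\to\GL_n(\Z)$, which records the total multiplicity with which each generator appears, and the transitivity of domination (Lemma~\ref{lem:transitive}), which guarantees that chains of admissible transvections compose to admissible ones. Together these tell me which right- or left-multiplications by type-(3) and type-(4) generators are legitimate moves at a given stage.

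The heart of the argument, and the step I expect to be the main obstacle, is the \emph{reduction lemma}: whenever $\|\alpha\|>n$, there is a generator $g$ of type (3) or (4) with $\|g\alpha\|<\|\alpha\|$ or $\|\alpha g\|<\|\alpha\|$, after which $\alpha=g^{-1}(g\alpha)$ falls to the inductive hypothesis. The difficulty is that a transvection $v\mapsto vw$ or a partial conjugation interacts subtly with the commutation relations: a move that cancels a letter at the end of one image $\alpha(v)$ may lengthen another image $\alpha(u)$, so one must show the total complexity can always be strictly decreased and, crucially, that the descent cannot stall at a local minimum other than the base case. This is precisely the analogue of Whitehead's \emph{peak reduction}. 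I would examine a shortest factorization of $\alpha$ over the elementary generators and rule out peaks --- adjacent factors $g_{i+1}g_i$ along which complexity first rises and then falls --- by rewriting them, using the domination relation and its transitivity, into complexity-monotone products. Verifying the admissibility of each proposed transvection (that the relevant vertex genuinely dominates its target in the current configuration) and the compatibility of each partial conjugation with the component decomposition of $\Gamma\setminus\st(v)$, together with the cancellation bookkeeping at the ends of normal forms, is the technically demanding part on which the whole argument rests.
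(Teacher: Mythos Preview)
The paper does not prove this theorem. It is stated as a cited result: the text immediately preceding the statement reads ``The following result was conjectured by Servatius~\cite{Servatius89} and proved by Laurence~\cite{Laurence},'' and no proof is supplied. The theorem is invoked only as a black box in the proofs of Lemma~\ref{lem:graph aut} and Theorem~\ref{thm:raag aut}.

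Consequently there is nothing in the paper to compare your proposal against. For what it is worth, your outline is broadly in the spirit of the original argument: Laurence's proof is indeed a Whitehead--style reduction, working with a length function on tuples of conjugacy classes and showing that an automorphism of non-minimal complexity can always be shortened by one of the listed generators. Your sketch correctly identifies the reduction lemma as the crux and the place where the real work lies; in the literature this step is substantial and occupies most of Laurence's paper (and was later streamlined by Day via a systematic peak-reduction theorem for right-angled Artin groups). What you have written is a reasonable high-level plan, but it is not a proof: the reduction lemma is asserted rather than established, and the combinatorics needed to rule out local minima and peaks is precisely the hard content you have deferred. If your goal is to supply a self-contained proof, you would need to carry out that step in full; if your goal is only to match the paper, a citation suffices.
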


The following lemma proves the easy direction of Theorem~\ref{thm:raag aut}:

\begin{lem}\label{lem:graph aut}
Suppose $\Gamma$ admits a nontrivial graph automorphism. Then $\Aut(A(\Gamma))$ (and therefore $\Out(A(\Gamma))$ contains a nonabelian finite subgroup.
\end{lem}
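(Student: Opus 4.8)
The plan is as follows. A nontrivial graph automorphism $\sigma$ of $\Gamma$ already generates a nontrivial \emph{finite} subgroup $\langle\sigma\rangle$ of $\Aut(A(\Gamma))$, but this need not be nonabelian (for instance $\Aut(\Gamma)$ could be $\Z/2\Z$), so graph automorphisms alone are not enough. The point is to combine $\sigma$ with the vertex inversions $\iota_v\colon v\mapsto v^{-1}$ (item (2) of Theorem~\ref{thm:laurence}), which $\sigma$ permutes among themselves but does not centralize.

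First I would assemble the candidate subgroup. Write $n=|V(\Gamma)|$, noting $n\ge 2$ since $\Gamma$ has a nontrivial automorphism, and let $G_0<\Aut(A(\Gamma))$ be generated by all graph automorphisms of $\Gamma$ together with all inversions $\iota_v$, $v\in V(\Gamma)$. A one-line check on vertex generators shows that distinct inversions commute, each has order $2$, and $\sigma\,\iota_v\,\sigma^{-1}=\iota_{\sigma(v)}$ for every graph automorphism $\sigma$. Hence there is a surjection $(\Z/2\Z)^{V(\Gamma)}\rtimes\Aut(\Gamma)\to G_0$, with $\Aut(\Gamma)$ acting by permuting the coordinates of $(\Z/2\Z)^{V(\Gamma)}$. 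To see this surjection is an isomorphism---so that $G_0$ is finite---I would compose with the abelianization representation $\phi\colon\Aut(A(\Gamma))\to\GL_n(\Z)$: graph automorphisms map to permutation matrices and inversions to diagonal $\pm1$ matrices, and the resulting map $(\Z/2\Z)^{V(\Gamma)}\rtimes\Aut(\Gamma)\to\GL_n(\Z)$ is the standard faithful action by signed permutation matrices, hence injective. Thus $G_0\cong(\Z/2\Z)^{V(\Gamma)}\rtimes\Aut(\Gamma)$ is finite, and it is nonabelian because for $\sigma\neq\id$ there is a vertex $v$ with $\sigma(v)\neq v$, whence $\sigma\iota_v\sigma^{-1}=\iota_{\sigma(v)}\neq\iota_v$ and $\sigma$ and $\iota_v$ do not commute.

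It remains to push $G_0$ into $\Out(A(\Gamma))$. Since inner automorphisms act trivially on the abelianization, $\Inn(A(\Gamma))\subseteq I(\Gamma)$, and $I(\Gamma)$ is torsion-free by Lemma~\ref{lem:toinet}; therefore the finite subgroup $G_0$ meets $\Inn(A(\Gamma))$ trivially and maps isomorphically onto a finite nonabelian subgroup of $\Out(A(\Gamma))$. I do not anticipate a real obstacle: the only subtlety is the one already highlighted, namely that one must enlarge $\Aut(\Gamma)$ by the vertex inversions to force nonabelianness, and that the faithfulness needed to recognize $G_0$ as the full semidirect product is exactly what Lemma~\ref{lem:toinet} (equivalently, the representation $\phi$) supplies.
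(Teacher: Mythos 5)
Your proof is correct and follows essentially the same route as the paper: the paper also takes the subgroup generated by the graph automorphisms together with all vertex inversions, identifies it as a semidirect product of $\Aut(\Gamma)$ with $(\Z/2\Z)^n$, observes it is nonabelian, and invokes Lemma~\ref{lem:toinet} to pass to $\Out(A(\Gamma))$. Your write-up merely supplies details the paper leaves implicit (faithfulness via the signed-permutation representation and the explicit non-commuting pair $\sigma$, $\iota_v$).
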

\begin{proof}
Let $1\neq Q=\Aut(\Gamma)$ be the group of graph automorphisms of $\Gamma$. Then by Theorem~\ref{thm:laurence}, the group $\Aut(A(\Gamma))$ contains the wreath product $Q\wr(\Z/2\Z)$, which is a semidirect product of $Q$ with $(\Z/2\Z)^n$. Here, each copy of $\Z/2\Z$ is identified with the inversion of some vertex. It is straightforward to see that this wreath product is not abelian.
\end{proof}

We will write $v\geq w$ if the vertex $v$ dominates the vertex $w$. By Lemma~\ref{lem:transitive}, $x\geq v\geq w$ implies $x\geq w$. Let $\{v_0,\ldots,v_{k-1}\}\subset V(\Gamma)$ be distinct vertices. We say that these vertices form a \emph{domination chain} if \[v_{k-1}\geq v_{k-2}\geq\cdots v_1\geq v_0.\] We say that these vertices form a \emph{domination cycle} if \[v_0\geq v_{k-1}\geq v_{k-2}\geq\cdots v_1\geq v_0.\] The following lemma shows that domination cycles give rise to graph automorphisms:

\begin{lem}\label{lem:dom cycle}
Suppose $k\geq 2$ and that $\{v_0,\ldots,v_{k-1}\}\subset V(\Gamma)$ forms a domination cycle. Then $\Gamma$ admits a nontrivial graph automorphism.
\end{lem}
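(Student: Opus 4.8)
The plan is to extract from the domination cycle a pair of distinct vertices that dominate one another, and then to show directly that transposing such a pair is a graph automorphism of $\Gamma$.

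First I would use transitivity of dominated transvection (Lemma~\ref{lem:transitive}) to ``close the loop'': composing $v_{k-1}\geq v_{k-2}\geq\cdots\geq v_1\geq v_0$ yields $v_{k-1}\geq v_0$, and together with the hypothesis $v_0\geq v_{k-1}$ this shows that $v_0$ and $v_{k-1}$ dominate one another. Since $k\geq 2$ and the $v_i$ are distinct, $v:=v_0$ and $w:=v_{k-1}$ are two distinct vertices satisfying $\lk(v)\subset\st(w)$ and $\lk(w)\subset\st(v)$.

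Next I would verify that the transposition $\sigma\in\mathrm{Sym}(V(\Gamma))$ that exchanges $v$ and $w$ and fixes every other vertex preserves $E(\Gamma)$, hence defines a graph automorphism. The only edges $\sigma$ can move are those incident to exactly one of $v,w$, so let $u\in V(\Gamma)\setminus\{v,w\}$ and suppose $\{u,v\}\in E$, i.e.\ $u\in\lk(v)$. Then $u\in\st(w)=\lk(w)\cup\{w\}$, and since $u\neq w$ we conclude $u\in\lk(w)$, i.e.\ $\{u,w\}\in E$. The symmetric argument, using $\lk(w)\subset\st(v)$, gives the converse implication, so $\{u,v\}\in E\iff\{u,w\}\in E$ for every such $u$; and $\sigma$ fixes the pair $\{v,w\}$ itself. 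Therefore $\sigma$ is an automorphism of $\Gamma$, and it is nontrivial because $v\neq w$.

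I do not expect a serious obstacle here. The one point worth noting is that no case analysis according to whether $v$ and $w$ are adjacent is actually required: the inclusion $u\in\lk(v)\subset\st(w)$ together with $u\neq w$ forces $u\in\lk(w)$ in either case. What is genuinely used is Lemma~\ref{lem:transitive}, which is what allows a domination cycle of length greater than two to be reduced to a pair of mutually dominating vertices.
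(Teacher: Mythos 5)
Your proof is correct and follows essentially the same route as the paper: use transitivity of domination to extract a pair of mutually dominating vertices from the cycle, then check that transposing them preserves adjacency and hence gives a nontrivial graph automorphism. The only cosmetic difference is that you pick the pair $v_0,v_{k-1}$ while the paper picks $v_0,v_1$, and you spell out the link/star verification in slightly more detail.
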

\begin{proof}
Note that since the domination relation is transitive, we have that if \[\{v_0,\ldots,v_{k-1}\}\subset V(\Gamma)\] forms a domination cycle then $v_i\geq v_j$ for all $i\neq j$. Choose a pair of vertices in the domination cycle, say $v_0$ and $v_1$. By the definition of domination, we see that if $x\in V(\Gamma)\setminus\{v_0,v_1\}$ then $x$ is adjacent to $v_0$ if and only if it is adjacent to $v_1$. It is clear then that exchanging $v_0$ and $v_1$ induces a nontrivial automorphism of $\Gamma$.
\end{proof}

Lemma~\ref{lem:dom cycle} implies that if $\Gamma$ admits no nontrivial graph automorphisms then we can order the vertices of $\Gamma$ in such a way as to respect the domination relation. This is the basic idea behind proving Theorem~\ref{thm:raag aut}:
\begin{proof}[Proof of Theorem~\ref{thm:raag aut}]
Let $F<\Out(A(\Gamma))$ be a finite subgroup. We have observed that under the map $\phi\colon \Out(A(\Gamma))\to\GL_n(\Z)$, the group $F$ is sent isomorphically to its image.

Note that since partial conjugations preserve the conjugacy class of generators in $A(\Gamma)$, they all lie in the kernel of $\phi$. It follows that the image of $\phi$ is generated by the image of vertex inversions, graph automorphisms, and dominated transvections.

So, we suppose for the remainder of the proof that $\Gamma$ admits no nontrivial graph automorphisms. If no vertex of $\Gamma$ dominates any other vertex, then $\phi(\Out(A(\Gamma)))$ is isomorphic to $(\Z/2\Z)^n$. Therefore, if $F<\Out(A(\Gamma))$ is a nonabelian finite subgroup, there is at least one pair of vertices, one dominating the other.

Since $\Gamma$ admits no nontrivial graph automorphisms, Lemma~\ref{lem:dom cycle} implies that $\Gamma$ contains no domination cycles. We build a directed graph $\Lambda(\Gamma)$ whose vertices are $V(\Gamma)$, and such that $(v,w)$ is a directed edge from $v$ to $w$ if $w\geq v$. An equivalent formulation of Lemma~\ref{lem:dom cycle} is that if $\Gamma$ admits no graph automorphisms then $\Lambda(\Gamma)$ has no cycles.

We define a height function $\lambda$ on $\Lambda(\Gamma)$ as follows. We say that $\lambda(v)=0$ if $v$ does not dominate any other vertices of $\Gamma$. Such vertices always exist, since $\Lambda$ has no cycles. For $n\geq 1$, we say that $\lambda(v)\geq n$ if $v$ dominates a vertex $w$ with $\lambda(w)=n-1$. Then, we define \[\lambda(v)=\min\{n\mid\lambda(v)\geq n\}.\]

Now choose an arbitrary order $\{v_1,\ldots,v_n\}$ on the indices of $V(\Gamma)$ such that if $i<j$ then $\lambda(v_i)\geq \lambda(v_j)$. Abelianizing $A(\Gamma)$, we have that the images of $\{v_1,\ldots,v_n\}$ form a basis for $A(\Gamma)^{ab}\cong\Z^n$, and the transvection $v_i\mapsto v_iv_j$ becomes $v_i\mapsto v_i+v_j$. Note furthermore that the transvection $v_i\mapsto v_iv_j$ exists in $\Aut(A(\Gamma))$ only if $i<j$. It follows that the image of the transvections in $\Aut(A(\Gamma))$ under $\phi$ in $\GL_n(\Z)$ are all simultaneously upper triangular and unipotent.

The image of the vertex inversions in $\GL_n(\Z)$ consists of matrices which are $\pm1$ along the diagonal, and thus generate an abelian subgroup $T<\GL_n(\Z)$. It follows then that the image of $\Aut(A(\Gamma))$ in $\GL_n(\Z)$ is contained in the group of upper triangular matrices with $\pm1$ along the diagonal, which is isomorphic to a semidirect product of $T$ and $U$, where $U$ is the group of unipotent matrices in $\GL_n(\Z)$. Notice that $U$ is torsion--free, so that any finite subgroup of $\GL_n(\Z)$ intersects $U$ trivially. It follows that $\phi(\Aut(A(\Gamma)))$ contains no nonabelian finite subgroups, whence the group $\Out(A(\Gamma))$ contains no nonabelian finite subgroups.
\end{proof}

The following is an immediate consequence of Theorem~\ref{thm:raag aut}, since the automorphism problem for a finite simplicial graph reduces to the graph isomorphism problem and is therefore solvable in quasi-polynomial time~\cite{S17}:
\begin{cor}
\label{autoRAAG}
The problem of finding a finite nonabelian subgroup of the group of (outer) automorphisms of a right-angled Artin group is solvable in quasi-polynomial time.
\end{cor}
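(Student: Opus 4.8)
The plan is to translate the search problem into the graph automorphism problem via Theorem~\ref{thm:raag aut}, and then to invoke the quasi-polynomial complexity of graph isomorphism. A right-angled Artin group $A(\Gamma)$ is specified by the finite simplicial graph $\Gamma$ itself, so with $n = |V(\Gamma)|$ the input size is polynomial in $n$. By Theorem~\ref{thm:raag aut}, the group $\Out(A(\Gamma))$ (equivalently, $\Aut(A(\Gamma))$) contains a finite nonabelian subgroup precisely when $\Gamma$ admits a nontrivial graph automorphism. Hence the \emph{decision} version of the problem is literally the graph automorphism problem, which reduces to the graph isomorphism problem and is therefore solvable in quasi-polynomial time by~\cite{Babai15}, cf.~\cite{S17}.

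To handle the \emph{search} version, that is, to actually exhibit a finite nonabelian subgroup when one exists, I would first compute a generating set $S$ for $\Aut(\Gamma)$. This can be done within the same complexity class: there is a standard polynomial-time Turing reduction from computing generators of $\Aut(\Gamma)$ to the graph isomorphism problem (via individualization of vertices together with the equivalence of vertex-colored and uncolored isomorphism), and a quasi-polynomial oracle called polynomially many times remains quasi-polynomial; alternatively, one can read off a generating set directly from the string-isomorphism algorithm of~\cite{Babai15}. If $S \subseteq \{\id\}$, report that $\Out(A(\Gamma))$ contains no finite nonabelian subgroup. Otherwise pick a nontrivial $\sigma \in S$ and set $Q = \langle \sigma \rangle \leq \Aut(\Gamma)$.

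Finally, follow the construction in the proof of Lemma~\ref{lem:graph aut}: by Theorem~\ref{thm:laurence} the automorphisms in $Q$ together with the $n$ vertex inversions generate inside $\Aut(A(\Gamma))$ a copy of the wreath product $Q \wr (\Z/2\Z)$, which is finite and nonabelian, and by Lemma~\ref{lem:toinet} this subgroup maps isomorphically into $\Out(A(\Gamma))$. One outputs it by listing the action of these generators on the vertex generators of $A(\Gamma)$. All of this postprocessing is polynomial in $n$, so the total running time is dominated by the single quasi-polynomial call. The one point requiring care, and the place I expect to be the main obstacle, is precisely this search-to-decision step: a crude approach such as testing every transposition of a pair of vertices fails, since $\Gamma$ can possess a nontrivial automorphism without any single vertex transposition being one, so one genuinely needs the constructive strengthening of the graph isomorphism algorithm; everything else is bookkeeping already carried out in Lemmas~\ref{lem:toinet} and~\ref{lem:graph aut} and in Theorem~\ref{thm:laurence}.
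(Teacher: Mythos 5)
Your proposal is correct, and its core step is exactly the paper's argument: the paper's proof of Corollary~\ref{autoRAAG} is a single sentence observing that, by Theorem~\ref{thm:raag aut}, the problem reduces to the graph automorphism problem, which in turn reduces to graph isomorphism and is hence quasi-polynomial. Where you go beyond the paper is in treating the \emph{search} version seriously: the paper never addresses how to exhibit the finite nonabelian subgroup, whereas you correctly invoke the standard polynomial-time Turing reduction from computing a generating set of $\Aut(\Gamma)$ to graph isomorphism and then run the explicit wreath-product construction of Lemma~\ref{lem:graph aut} (with Lemma~\ref{lem:toinet} guaranteeing injectivity into $\Out(A(\Gamma))$). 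Your warning that one cannot simply test vertex transpositions is well taken, since a graph can be non-rigid without admitting any transposition as an automorphism. This is a legitimate strengthening of the paper's one-line proof rather than a different route, and nothing in it is in error.
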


\section{The Decomposition Problem}
\label{DecProblem}
In this section we propose a problem which has both graph theoretic and group theoretic analogues and which is efficiently solvable.

\subsection{The complexity of the decomposition problem}
Given a graph $\Gamma$, the \textbf{Decomposition Problem} consists in decomposing $\Gamma$ as a join of simpler graphs, where here simpler means that the join factors have fewer vertices than $\Gamma$. Recall that a graph $J$ is a \emph{join} of two subgraphs $J\cong J_1*J_2$ if $V(J)=V(J_1)\cup V(J_2)$ and if for each pair $v\in V(J_1)$ and $w\in V(J_2)$, we have $\{v,w\}\in E(J)$. A join $J_1*J_2$ is called nontrivial if both $J_1$ and $J_2$ are nonempty. A join decomposition $J=J_1*\cdots *J_n$ of a graph $J$ is maximal if for each $i$, the graph $J_i$ does not decompose as a nontrivial join. A maximal join decomposition of a finite simplicial graph is unique; see the proof of Proposition~\ref{prop:dec} below.

On the algebraic side, the Decomposition Problem for groups seeks a decomposition of a group as a direct product of proper subgroups. For right-angled Artin groups, it is true (though not entirely trivial) that the decomposition problem for the underlying graph and for the group are equivalent. The following result follows from Servatius' Centralizer Theorem (see~\cite{BehrChar2012,Servatius89,koberda}):

\begin{thm}
Let $\Gamma$ be a finite simplicial graph and let $A(\Gamma)$ be the corresponding right-angled Artin group. The group $A(\Gamma)$ decomposes as a nontrivial direct product if and only if $\Gamma$ decomposes as a nontrivial join.
\end{thm}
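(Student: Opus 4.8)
The implication that a nontrivial join decomposition $\Gamma = \Gamma_1 * \Gamma_2$ yields a nontrivial direct product is immediate from the presentation: every vertex generator coming from $\Gamma_1$ commutes with every vertex generator coming from $\Gamma_2$, so $A(\Gamma) = A(\Gamma_1) \times A(\Gamma_2)$, and each factor is nontrivial because each $\Gamma_i$ has at least one vertex. All the work is in the converse, so the plan is: assuming $A(\Gamma) = H \times K$ with $H$ and $K$ nontrivial, exhibit a nontrivial join decomposition of $\Gamma$.

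I would begin with two reductions. If $\Gamma$ has a \emph{cone vertex}, i.e.\ a vertex $v$ adjacent to all the others, then $\Gamma = \{v\} * (\Gamma \setminus \{v\})$ is a nontrivial join; here $|V(\Gamma)| \geq 2$, because $A(\Gamma)$, being a nontrivial direct product of torsion-free groups, contains a copy of $\Z^2$ and so is neither trivial nor infinite cyclic. So assume $\Gamma$ has no cone vertex. By Servatius' Centralizer Theorem, $C_{A(\Gamma)}(v) = \langle \st(v) \rangle$ for each vertex $v$, and intersecting over all $v$ shows $Z(A(\Gamma)) = \langle \{u : u \text{ is a cone vertex of } \Gamma\} \rangle$. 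Thus $Z(A(\Gamma)) = 1$, hence $Z(H) = Z(K) = 1$.

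The heart of the matter is to prove that, under these hypotheses, each vertex generator lies entirely in $H$ or entirely in $K$. Fix $v \in V(\Gamma)$ and write $v = h_v k_v$ with $h_v \in H$ and $k_v \in K$; since $H$ and $K$ commute, $v$, $h_v$, $k_v$ commute pairwise, and from the product structure of $H \times K$ one reads off $C_{A(\Gamma)}(v) = C_H(h_v) \times C_K(k_v)$. As $h_v$ commutes with all of $C_H(h_v)$, and with all of $C_K(k_v) \leq K$ because $h_v \in H$, both $h_v$ and $k_v$ are central in $C_{A(\Gamma)}(v)$. By Servatius' Centralizer Theorem again, $C_{A(\Gamma)}(v) = \langle \st(v) \rangle \cong A(\st(v))$, and the cone vertices of the graph $\st(v)$ are $v$ together with the cone vertices of $\lk(v)$, which span a clique; hence $Z(C_{A(\Gamma)}(v))$ is free abelian, and $h_v$ and $k_v$ both lie in it with product $v$. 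When $\lk(v) = \emptyset$ this centre is infinite cyclic, so one of $C_H(h_v)$, $C_K(k_v)$ is trivial, giving $h_v = 1$ or $k_v = 1$. In general I would exclude the twisted possibility $h_v \neq 1 \neq k_v$ using (i) the fact that $\langle v \rangle$ is a maximal cyclic subgroup of $A(\Gamma)$ which, as $\Z$ contains no $\Z^2$, cannot meet both $H$ and $K$ nontrivially, together with (ii) the rigidity of direct-product decompositions of the smaller right-angled Artin group $A(\st(v))$ (a Krull--Remak--Schmidt type statement, available here because the relevant factors are themselves right-angled Artin groups and $A(\Gamma)$ is torsion-free); these combine to push $h_v$ into $\langle v \rangle \cap H$ and $k_v$ into $\langle v \rangle \cap K$, one of which is trivial. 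This is the step I expect to be the main obstacle — ruling out twisted splittings of the vertex generators — and it is precisely where Servatius' Centralizer Theorem does the essential work, by pinning down $C_{A(\Gamma)}(v)$ and its centre.

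Granting this, partition $V(\Gamma) = V_H \sqcup V_K$ where $V_H$ consists of the vertices lying in $H$ and $V_K$ of those lying in $K$. Both sets are nonempty: if, say, $V_H = \emptyset$, then every vertex generator lies in $K$, forcing $A(\Gamma) \leq K$ and $H = 1$, a contradiction; symmetrically $V_K \neq \emptyset$. Since every element of $H$ commutes with every element of $K$, every vertex of $V_H$ commutes with every vertex of $V_K$; and two vertex generators of $A(\Gamma)$ commute if and only if the corresponding vertices are adjacent. Hence all edges between $V_H$ and $V_K$ are present, i.e.\ $\Gamma = \Gamma[V_H] * \Gamma[V_K]$ is a nontrivial join, as required.
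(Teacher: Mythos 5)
The paper does not actually prove this theorem: it records it as a consequence of Servatius' Centralizer Theorem and points to \cite{BehrChar2012,Servatius89,koberda}, so your attempt has to be judged on its own merits. Much of it is sound: the forward direction, the reduction to the case where $\Gamma$ has no cone vertex, the identity $C_{A(\Gamma)}(v)=C_H(h_v)\times C_K(k_v)$, the observation that $h_v,k_v\in Z(C_{A(\Gamma)}(v))=Z(\langle \st(v)\rangle)$, and the resolution of the case where $v$ is the only cone vertex of $\st(v)$ (there $h_v=v^m$, $k_v=v^{1-m}$ and $H\cap K=1$ forces $m\in\{0,1\}$) are all correct.

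The gap is exactly where you predicted it, and the tools you propose do not close it. When $\st(v)$ has a cone vertex $u\neq v$, the group $Z(C_{A(\Gamma)}(v))$ is free abelian of rank at least $2$, and nothing you have written forces $h_v$ and $k_v$ into $\langle v\rangle$: your point (i), the maximality of the cyclic subgroup $\langle v\rangle$, only excludes the case where $h_v$ and $k_v$ are both nontrivial powers of $v$, and says nothing about, say, $h_v=vu$, $k_v=u^{-1}$. Your point (ii) cannot be repaired as stated, because Krull--Remak--Schmidt uniqueness of the \emph{actual factors} (not just their isomorphism types) requires trivial center or chain conditions, while $A(\st(v))$ has nontrivial center precisely in the problematic case; indeed your own reduction step implicitly exhibits $\Z^2=\langle ab\rangle\times\langle ab^2\rangle$, a decomposition in which the vertex generators do split, so no rigidity statement internal to $A(\st(v))$ can rule out twisted splittings --- global information about $\Gamma$ must enter. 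A cleaner way to finish, and presumably what the citation to \cite{BehrChar2012} is for, avoids the vertex-by-vertex analysis entirely: in a nontrivial direct product $H\times K$ of torsion-free groups, \emph{every} nontrivial element has a copy of $\Z^2$ in its centralizer (use $\langle h\rangle\times\langle k\rangle$ if both coordinates are nontrivial, and $\langle h\rangle\times K$ otherwise); on the other hand, if $\Gamma$ is not a join then $A(\Gamma)$ contains an element with cyclic centralizer --- for disconnected $\Gamma$ because a reduced word meeting two free factors of a free product has cyclic centralizer, and for connected $\Gamma$ that is not a join by the lemma of Behrstock--Charney, which rests on Servatius' Centralizer Theorem. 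I would either import that lemma or replace the second half of your argument with it.
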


We propose next an algorithm that, in the context of right-angled Artin groups, solves both problems in polynomial time:

\begin{prop}\label{prop:dec}
Let $\Gamma$ be a finite simplicial graph. Then there is an algorithm which takes as an input $\Gamma$ (namely, a finite set $V=V(\Gamma)$ of vertices and a subset $E=E(\Gamma)$ of unordered pairs of vertices of $V$) and outputs a list of $\{\Gamma_1,\ldots,\Gamma_n\}$ of subgraphs of $\Gamma$ such that $\Gamma$ is isomorphic to the join $\Gamma\cong \Gamma_1*\cdots *\Gamma_n$ and such that each $\Gamma_i$ does not decompose further as a join. This algorithm is polynomial time in $|V(\Gamma)|$.
\end{prop}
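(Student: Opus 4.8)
The plan is to work with the complement graph and reduce the join decomposition of $\Gamma$ to the connected component decomposition of its complement $\Gamma^c$. Recall that $\{v,w\}$ is an edge of the join $J_1 * J_2$ whenever $v,w$ lie in different factors, so in the complement $\Gamma^c$ there is no edge between $V(J_1)$ and $V(J_2)$; conversely, a partition of $V(\Gamma^c)$ into two nonempty sets with no edges between them gives a nontrivial join decomposition of $\Gamma$. Thus $\Gamma$ decomposes as a nontrivial join if and only if $\Gamma^c$ is disconnected, and more precisely the maximal join decomposition $\Gamma \cong \Gamma_1 * \cdots * \Gamma_n$ corresponds exactly to the decomposition of $\Gamma^c$ into its connected components $C_1,\ldots,C_n$, with $\Gamma_i$ the induced subgraph of $\Gamma$ on $V(C_i)$.

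First I would establish the uniqueness of the maximal join decomposition, which was promised in the text. This follows immediately from the translation above: the connected components of $\Gamma^c$ are canonically determined, and a subgraph $\Gamma_i$ of $\Gamma$ fails to decompose as a nontrivial join precisely when the corresponding induced subgraph of $\Gamma^c$ is connected. Since the partition of a graph into connected components is unique, so is the maximal join decomposition. Next I would describe the algorithm: build the complement graph $\Gamma^c$ on the vertex set $V=V(\Gamma)$ by taking all unordered pairs not in $E(\Gamma)$; run a standard connected-components search (breadth-first or depth-first search, or a union-find procedure) on $\Gamma^c$; and for each component $C_i$ output the induced subgraph $\Gamma_i = \Gamma[V(C_i)]$, namely the vertex set $V(C_i)$ together with those pairs in $E(\Gamma)$ both of whose endpoints lie in $V(C_i)$.

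Finally I would analyze the running time. Writing $m = |V(\Gamma)|$, constructing $\Gamma^c$ takes $O(m^2)$ time since we examine each of the $\binom{m}{2}$ pairs once. A breadth-first or depth-first traversal of $\Gamma^c$ runs in time $O(m + |E(\Gamma^c)|) = O(m^2)$, and extracting each induced subgraph $\Gamma_i$ costs $O(m^2)$ in total over all $i$ (again scanning all pairs once and sorting them into the appropriate component). Hence the whole procedure is $O(m^2)$, which is polynomial in $|V(\Gamma)|$, and by the theorem preceding the proposition this simultaneously solves the decomposition problem for the right-angled Artin group $A(\Gamma)$.

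I do not expect a serious obstacle here; the only point requiring a little care is verifying the claimed correspondence between join factors and complement components in both directions — in particular that the induced subgraphs on the complement-components are genuinely join-indecomposable — but this is a direct unwinding of the definition of join. The main ``content'' of the proposition is really the observation that the group-theoretic decomposition problem for $A(\Gamma)$ is governed entirely by this elementary graph operation, together with the standard fact that connected components are computable in linear time in the size of the graph.
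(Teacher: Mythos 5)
Your proposal is correct and follows essentially the same route as the paper: pass to the complement graph, identify its connected components (the paper uses a union-find-style ``urns'' procedure where you suggest BFS/DFS or union-find), and read off the maximal join factors as the induced subgraphs on those components, all in $O(|V|^2)$ time. Your explicit uniqueness argument for the maximal join decomposition is a welcome addition, since the paper only gestures at it.
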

\begin{proof}
We first replace $\Gamma$ by its complement graph $X$, which is to say $\{v,w\}\in E(X)$ if and only if $\{v,w\}\notin E(\Gamma)$. Since the number of edges of the complete graph on $V$ has $O(|V|^2)$ vertices, replacing $\Gamma$ by its complement requires only polynomially many computations. The purpose of this step is that the connected components of the graph $X$ are in bijective to maximal join factors of $\Gamma$.

Next, choose an arbitrary ordering on $V$ and sort the edges of $X$ lexicographically, so that if $\{v, w\}\in E$ then $v<w$. This step requires only polynomially many computations, since many sorting algorithms are efficient.

Next, we take $|V|$ urns and place each vertex of $X$ into one of the urns. We then process the list of edges of $X$ so that if $\{v,w\}\in E(X)$ then we combine the urns containing $v$ and $w$ into one urn. When two urns are combined, the vertices in that urn are listed in lexicographical order. After at most $|E(X)|$ steps, we have processed the entire list $E(X)$. Two vertices of $X$ are connected by a path in $X$ if and only if they lie in the same urn, as is clear from the construction.

It follows that the connected components of $X$ are in bijective correspondence with the urns at the end of the process described in the previous paragraph. The vertices lying in a particular urn span a factor in the maximal join decomposition of $\Gamma$.
\end{proof}

We note that Proposition~\ref{prop:dec} is probably well known to graph theorists, and we do not claim originality here. Much investigation of the complexity of various decomposition problems has been carried out by other authors (see~\cite{Holyer,CohenTarsi,LoncPszczola}).

\section{The membership  problem for distorted subgroups of right-angled Artin groups}
\label{Membership}
Recall that given a group $G$ and a subgroup $H<G$, the \textbf{membership problem} consists in deciding if an element of $G$ belongs to $H$. In group-based cryptography, it is useful to produce a finitely generated group $G$ together with a finitely generated subgroup (or oftentimes many subgroups) $H<G$ such that $H$ is exponentially distorted inside of $G$, but so that the membership problem for $H$ in $G$ is efficiently solvable. The theoretical advantage of such a pair $(G,H)$ is that computations for $H$ done inside of $G$ are much quicker by virtue of exponential distortion, but if $H$ is not known to an eavesdropper then computing data about elements of $H$ such as word lengths is prohibitive. Explicit cryptosystems with free-by-cyclic groups as a platform were produced in~\cite{CKL}. We would like to propose platform groups which fit into the preceding discussion of right-angled Artin groups more naturally.

This perspective in mind, we prove the following fact:

\begin{prop}\label{prop:distorted}
There exists a right-angled Artin group $A(\Gamma)$ and a (possibly punctured) surface subgroup $\pi_1(S)$ which is exponentially distorted inside of $A(\Gamma)$, and the membership problem for $\pi_1(S)<A(\Gamma)$ is solvable in at worst exponential time.
\end{prop}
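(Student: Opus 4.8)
The plan is to exhibit a concrete right-angled Artin group containing a distorted surface subgroup, and then to observe that membership can be decided by brute force in exponential time. For the construction of the distorted surface subgroup I would invoke the standard machinery connecting right-angled Artin groups to $3$-manifold groups: by work of the third author and collaborators, every surface group embeds in some right-angled Artin group, and by the classical results on fibered hyperbolic $3$-manifolds (in the spirit of Thurston), the fiber surface subgroup of such a manifold is exponentially distorted in the ambient group because the monodromy is a pseudo-Anosov mapping class. Concretely, I would start with a closed hyperbolic $3$-manifold $M$ fibering over the circle with fiber $S$, so that $\pi_1(M) \cong \pi_1(S) \rtimes_\psi \Z$ for a pseudo-Anosov $\psi$. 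Since $M$ is hyperbolic and (after passing to a finite cover, using Agol and Wise) virtually special, a finite-index subgroup $\pi_1(M') < \pi_1(M)$ embeds in a right-angled Artin group $A(\Gamma)$; the intersection $\pi_1(M') \cap \pi_1(S)$ is a finite-index, hence again surface, subgroup $\pi_1(S')$, and it remains exponentially distorted inside $A(\Gamma)$ since distortion is inherited through the tower of embeddings $\pi_1(S') < \pi_1(M') < A(\Gamma)$ — the word length of $\psi^n$-images in the fiber grows exponentially while the corresponding elements have linear length in $A(\Gamma)$.

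The second half — decidability of the membership problem in at worst exponential time — follows from general principles. Fix the vertex generators of $A(\Gamma)$ and a finite generating set $\{h_1,\ldots,h_m\}$ for $\pi_1(S') < A(\Gamma)$. Given a word $w$ of length $\ell$ in the vertex generators, its shortlex normal form can be computed in polynomial time in $\ell$ by \cite{DKL}, and in particular the geodesic length $|w|$ of the element it represents is computed in polynomial time. If $w \in \pi_1(S')$ then $w$ is a product of at most some bounded number of the $h_i^{\pm 1}$ determined by $|w|$: indeed, since $\pi_1(S')$ is finitely generated and hence undistorted-up-to-exponential, there is a constant $C$ with the property that any element of $\pi_1(S')$ of geodesic length $\le L$ in $A(\Gamma)$ is a product of at most $C^L$ generators $h_i^{\pm 1}$. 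One then enumerates all words of length $\le C^{|w|}$ in the $h_i^{\pm 1}$, computes the shortlex normal form of each (each such computation is polynomial in $C^{|w|}$, hence exponential in $\ell$), and checks whether any coincides with the shortlex normal form of $w$. This is a finite search of exponential size with polynomial cost per step, so the whole procedure runs in exponential time, and it returns ``yes'' precisely when $w \in \pi_1(S')$.

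The main obstacle is the first half, and specifically making the exponential distortion both true and verifiable: one must ensure that the surface subgroup one lands in after the virtual-specialness detour really is the fiber of a fibration (so that the pseudo-Anosov monodromy forces exponential distortion), rather than some other surface subgroup with no a priori distortion. The cleanest way to handle this is to be careful about the order of operations — first pass to the finite cover $M'$ making $\pi_1(M)$ special, choosing $M'$ so that the fibration $M \to S^1$ lifts to a fibration $M' \to S^1$ with fiber $S'$ (this is automatic after passing to a further finite cyclic cover in the base direction), and only then embed $\pi_1(M')$ into $A(\Gamma)$. With the fibration preserved, $\psi$ restricts to a pseudo-Anosov on $S'$, Thurston's results give a hyperbolic structure on $M'$, and the exponential lower bound on distortion of $\pi_1(S')$ in $\pi_1(M')$ — and hence in $A(\Gamma)$ — follows from the exponential growth of intersection numbers under iteration of a pseudo-Anosov. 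I would not expect the distortion \emph{upper} bound to be an issue: any finitely generated subgroup of $A(\Gamma)$ is at most exponentially distorted, since $A(\Gamma)$ has a solvable word problem with normal forms of polynomially bounded length.
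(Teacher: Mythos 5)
Your construction of the distorted surface subgroup is essentially the one the paper uses (Agol's virtual specialness and virtual fibering plus Thurston's theorem on mapping tori of pseudo-Anosov maps), and that half is fine. The genuine gap is in your membership algorithm. You bound the $\pi_1(S')$-word length of an element $w$ of geodesic length $|w|$ by $C^{|w|}$ and then propose to enumerate all words of length at most $C^{|w|}$ in the generators $h_i^{\pm 1}$. The number of such words is roughly $(2m)^{C^{|w|}}$, which is \emph{doubly} exponential in the input length $\ell$, not exponential; even writing down a single candidate word already requires exponentially many symbols. So your procedure decides membership, but not within the claimed exponential time bound. The paper avoids this by exploiting the intermediate subgroup: it first searches over words of length at most $C\cdot N + C$ in the generators of the \emph{undistorted} subgroup $\pi_1(M')$ (Lemma~\ref{lem:memb-general}), which is a singly exponential search, and then decides membership of the resulting word in the fiber subgroup in linear time using the exponent sum under the homomorphism $\pi_1(M')\to\Z$ whose kernel is the fiber (Proposition~\ref{prop:membership}). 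The point is that one should never attempt to rewrite $w$ in the generators of the distorted subgroup directly; the fibration structure lets you test membership in the kernel without doing so.

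A secondary error: your claim that \emph{any} finitely generated subgroup of $A(\Gamma)$ is at most exponentially distorted because $A(\Gamma)$ has an efficiently solvable word problem is false. Solvability of the word problem in the ambient group places no bound on subgroup distortion; indeed the paper itself cites Bridson's finitely presented subgroups of right-angled Artin groups with unsolvable membership problem, and such subgroups cannot have recursively bounded distortion. The exponential upper bound you need does hold here, but for the specific reason that the fiber is at most exponentially distorted in $\pi_1(M')$ (Thurston) and $\pi_1(M')$ is undistorted in $A(\Gamma)$ (Agol), not by any general principle about subgroups of right-angled Artin groups.
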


The exact complexity of the membership problem is not clear to us, but we suspect it is at most linear.

Proposition~\ref{prop:distorted} follows from several deep results of other authors, together with some fairly straightforward facts. Recall that a \emph{quasi--isometric embedding} between two finitely generated groups $H$ and $G$ is a function $f\colon H\to G$ for which there is a constant $C>0$ such that the word metric on $H$ and $f(H)$ (as induced from $G$ in the latter case) are $C$--bi-Lipschitz, up to a $C$--additive error. If $H<G$ is a subgroup for which the inclusion map is a quasi--isometric embedding, then we say that $H$ is \emph{undistorted} in $G$.

The following is a result of I. Agol:

\begin{thm}[Theorem 1.1 of~\cite{Agol}]\label{thm:agol}
Let $M$ be a finite volume hyperbolic $3$--manifold, and let $G=\pi_1(M)$. Then there exists a right-angled Artin group $A(\Gamma)$ and a finite index subgroup $G'<G$ such that $G'\to A(\Gamma)$ is an undistorted subgroup.
\end{thm}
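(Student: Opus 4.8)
The plan is to deduce the statement from the assertion that $G=\pi_1(M)$ is \emph{virtually special} in the sense of Haglund--Wise, and then to extract the right-angled Artin group together with the undistortion property directly from cube-complex geometry. Recall that a nonpositively curved cube complex $X$ is \emph{special} precisely when its hyperplanes avoid the four pathologies (self-intersection, one-sidedness, self-osculation, inter-osculation) that obstruct a combinatorial local isometry from $X$ into the Salvetti complex $S(\Gamma)$ of a right-angled Artin group $A(\Gamma)$, where $\Gamma$ is the crossing graph of the hyperplanes of $X$. Haglund--Wise~\cite{HaglundWise} prove that specialness is equivalent to the existence of such a local isometry $X\to S(\Gamma)$, and that the induced map $\pi_1(X)\to\pi_1(S(\Gamma))=A(\Gamma)$ is injective with combinatorially convex image in the universal cover. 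Thus the whole statement reduces to producing a finite-index $G'<G$ with $G'\cong\pi_1(X)$ for a compact special $X$ (i.e.\ proving $G$ is virtually compact special), and then upgrading convexity to undistortion.

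First I would cubulate $G$. In the closed case, the Kahn--Markovic surface subgroup theorem supplies an abundance of quasiconvex (quasi-Fuchsian) surface subgroups, hence codimension-one subgroups in enough relative positions; feeding these into Sageev's construction, Bergeron--Wise show that $G$ acts properly and cocompactly on a CAT(0) cube complex $\tilde X$. In the cusped (finite-volume, noncompact) case $G$ is no longer hyperbolic but is hyperbolic relative to its virtually-$\Z^2$ cusp subgroups; here I would invoke the relative versions of these inputs (relative Kahn--Markovic/Cooper--Futer surfaces together with the relative Sageev construction), which yield a proper action that is cocompact relative to the cusps, with each peripheral $\Z^2$ acting cocompactly on a convex subcomplex.

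The crux is Agol's theorem that a hyperbolic group acting properly and cocompactly on a CAT(0) cube complex is virtually compact special, and this is where the genuinely deep work lies; I would not expect to reprove it from scratch. Its engine is Wise's Malnormal Special Quotient Theorem combined with Agol's coloring/weak-separation argument, which constructs a finite-index special cover by producing a suitable finite quotient of the hyperplane-coloring data and invoking canonical completion. In the cusped case one must instead run the relatively hyperbolic version: relatively hyperbolic Dehn filling (Groves--Manning, Osin) reduces matters to the hyperbolic cubical setting, after which the relative specialness machinery of Agol--Groves--Manning concludes that $G$ is virtually special even though the original action is only cocompact relative to the cusps. This step---the full strength of Agol's argument in both the absolute and relative settings---is the main obstacle, and all the essential difficulty is concentrated here.

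Finally, once a finite-index $G'<G$ is realized as $\pi_1(X)$ for a compact special $X$, the Haglund--Wise local isometry $X\to S(\Gamma)$ lifts to a convex embedding $\tilde X\hookrightarrow\widetilde{S(\Gamma)}$ of CAT(0) cube complexes, with $\Gamma$ finite since $X$ is compact. Convex subcomplexes are isometrically embedded for the combinatorial $\ell^1$ metric, so comparing orbit maps through the Milnor--Svarc lemma shows that the inclusion $G'\hookrightarrow A(\Gamma)$ is a quasi-isometric embedding; equivalently, $G'$ is undistorted in $A(\Gamma)$, as required. The one subtlety to verify is that, in the cusped case, compactness of $X$ (cocompactness of the $G'$-action) survives the relative construction, which is exactly what guarantees both finiteness of $\Gamma$ and genuine undistortion; this is arranged by passing to a finite-index subgroup that is compact special rather than merely special.
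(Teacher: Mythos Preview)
The paper does not prove this theorem at all: it is quoted as Theorem~1.1 of Agol~\cite{Agol} and used as a black box in the proof of Proposition~\ref{prop:membership-detailed}. There is therefore nothing in the paper to compare your argument against.

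That said, your sketch is a faithful high-level outline of how the result is actually established in the literature: cubulate $G$ (Kahn--Markovic together with Bergeron--Wise in the closed case; Wise's quasiconvex hierarchy machinery in the cusped case), invoke Agol's theorem that word-hyperbolic cubulated groups are virtually compact special, and then apply the Haglund--Wise local isometry into the Salvetti complex to obtain a convex, hence quasi-isometrically embedded, copy of $G'$ inside $A(\Gamma)$. One historical and technical point: in the cusped case $G$ is not word-hyperbolic, and the route you describe through relatively hyperbolic Dehn filling and a ``relative Agol'' argument is not the original one. Wise had already shown, prior to Agol's work, that cusped finite-volume hyperbolic $3$--manifold groups are virtually compact special via his Malnormal Special Quotient Theorem and quasiconvex hierarchy; this bypasses the issue you flag about cocompactness of the action and compactness of $X$. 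Your concern about arranging a genuinely \emph{compact} special quotient (so that $\Gamma$ is finite and undistortion follows from convexity plus Milnor--\v{S}varc) is exactly the right one, and it is resolved by that machinery rather than by the relative filling route you propose.
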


The following is a well--known fact due to Thurston:

\begin{thm}[See~\cite{Travaux}]\label{thm:surface bundle}
Let $S$ be an orientable surface of negative Euler characteristic and let $\psi$ be a pseudo-Anosov mapping class of $S$. Then the mapping torus $M=M_{\psi}$ of $\psi$ is a hyperbolic $3$--manifold of finite volume. Moreover, the inclusion map $\pi_1(S)\to \pi_1(M)$ is exponentially distorted.
\end{thm}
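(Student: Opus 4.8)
The plan is to treat the two assertions separately: the hyperbolization of the mapping torus $M=M_\psi$, and the distortion estimate for the fiber. For the setup, I would fix a stable letter $t$ for the fibration $M\to S^1$, so that $\pi_1(M)\cong\pi_1(S)\rtimes_{\psi_*}\Z$ with conjugation by $t$ inducing the action $\psi_*$ of $\psi$ on $\pi_1(S)$; in particular $t^n g\,t^{-n}=\psi_*^{\pm n}(g)$ for every $g\in\pi_1(S)$, the sign depending only on orientation conventions. I would also fix a complete finite-area hyperbolic metric on $S$ (with cusps when $S$ is punctured), so that $\pi_1(S)$ acts geometrically on $\mathbb{H}^2$.

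The first assertion is Thurston's hyperbolization of fibered $3$--manifolds, and I expect this to be the main obstacle; accordingly I would invoke it rather than reproduce it. The strategy is to verify the hypotheses of Thurston's geometrization theorem for Haken manifolds. Since $M$ fibers over the circle with incompressible fiber $S$, it is Haken with infinite fundamental group. The Nielsen--Thurston classification enters decisively here: because $\psi$ is pseudo-Anosov, it is neither periodic nor reducible, from which one deduces that $M$ is atoroidal and not Seifert fibered. Thurston's theorem then supplies a complete hyperbolic structure on $M$, and $\chi(S)<0$ forces this structure to have finite volume, its cusps being exactly those coming from the punctures of $S$. The genuine content is the fibered case of geometrization, which is the province of Thurston's double limit theorem; I would cite \cite{Travaux} for all of this.

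For the distortion, the key input is the dilatation $\lambda>1$ of $\psi$, and this is where I would do the actual work. Fix a nontrivial $g\in\pi_1(S)$ represented by an essential closed curve $\gamma$, and set $h_n:=t^n g\,t^{-n}=\psi_*^{\pm n}(g)\in\pi_1(S)$. In $G=\pi_1(M)$ one has the trivial linear bound $|h_n|_G\le|g|_G+2n$. For the fiber length, I would use the standard fact from pseudo-Anosov dynamics that the hyperbolic geodesic length of $\psi^n\gamma$ grows with exponential rate $\log\lambda$, that is $\tfrac1n\log\ell_S(\psi^n\gamma)\to\log\lambda$ (the geometric intersection numbers of $\psi^n\gamma$ with any filling curve system grow like $\lambda^n$). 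Since $\pi_1(S)$ acts geometrically on $\mathbb{H}^2$, the Milnor--\v{S}varc lemma identifies word length with geodesic length up to a quasi-isometry, so $|h_n|_{\pi_1(S)}$ also grows with rate $\log\lambda$. Comparing with the linear bound for $|h_n|_G$ shows the distortion function is bounded below by an exponential.

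To see that the distortion is no worse than exponential, I would establish a matching upper bound valid for any semidirect product $\pi_1(S)\rtimes_{\psi_*}\Z$ with finitely generated fiber. Given $w\in\pi_1(S)$ with $|w|_G\le n$, write $w$ as a word of length at most $n$ in the generators and collect the occurrences of $t^{\pm1}$ to one end using $t^{\pm1}s\,t^{\mp1}=\psi_*^{\pm1}(s)$; since the total $t$--exponent of $w$ vanishes, this returns a word in $\pi_1(S)$. Each fiber generator is conjugated by a net of at most $n$ powers of $t$, and conjugation by a single $t^{\pm1}$ scales $\pi_1(S)$--length by at most the constant $K:=\max_s|\psi_*^{\pm1}(s)|_{\pi_1(S)}$, so its length grows to at most $K^n$; summing over the at most $n$ fiber generators gives $|w|_{\pi_1(S)}\le nK^n$. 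Together with the lower bound of the previous paragraph, this shows that $\pi_1(S)$ is exponentially distorted in $\pi_1(M)$, as claimed.
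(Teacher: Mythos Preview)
The paper does not actually prove this theorem; it is stated as a well--known fact due to Thurston and simply cited to \cite{Travaux}. So there is no proof in the paper to compare your attempt against. That said, your sketch is essentially the standard argument and is broadly correct, with one caveat worth flagging.

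Your treatment of the hyperbolization is appropriate: this is genuinely Thurston's fibered hyperbolization, and citing it (after checking atoroidality and non--Seifert--fibered via the pseudo-Anosov hypothesis) is the right move. Your distortion argument is also the expected one, and the upper bound via pushing $t^{\pm1}$ through a word in a semidirect product is clean and correct.

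The one soft spot is the appeal to Milnor--\v{S}varc in the punctured case. When $S$ has cusps, $\pi_1(S)$ does \emph{not} act geometrically on $\mathbb{H}^2$ (the action is not cocompact), and word length in the free group $\pi_1(S)$ is not bi-Lipschitz to hyperbolic translation length; peripheral elements already show the failure. So the sentence ``$\pi_1(S)$ acts geometrically on $\mathbb{H}^2$'' is false as written when $S$ is noncompact, and the Milnor--\v{S}varc step does not go through there. Fortunately, your own parenthetical contains the repair: geometric intersection number with a fixed filling curve system gives, up to a multiplicative constant, a lower bound for word length of any conjugacy class in $\pi_1(S)$ (choose generators dual to the filling system), and those intersection numbers grow like $\lambda^n$ under a pseudo-Anosov. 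If you promote that parenthetical to the main argument for the lower bound (and restrict the hyperbolic-length version to the closed case, or drop it entirely), the proof is complete in both the closed and punctured settings.
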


The following fact is very easy:

\begin{prop}\label{prop:membership}
Let $\pi_1(S)<\pi_1(M)$ be a fiber subgroup of a hyperbolic $3$--manifold of finite volume. Then the membership problem for $\pi_1(S)$ is solvable in linear time.
\end{prop}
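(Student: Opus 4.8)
The plan is to exploit the algebraic structure of a fibered hyperbolic $3$--manifold group, namely that $\pi_1(M)$ is an ascending HNN extension—indeed a genuine semidirect product $\pi_1(S)\rtimes_\psi\Z$—together with the fact that $\pi_1(S)$ is precisely the kernel of the map to the $\Z$ factor. Concretely, the fibration $S\hookrightarrow M\to S^1$ gives a short exact sequence
\[
1\to \pi_1(S)\to \pi_1(M)\xrightarrow{\ \epsilon\ }\Z\to 1,
\]
and membership in $\pi_1(S)$ is exactly the condition $\epsilon(g)=0$. So the membership problem reduces to evaluating this homomorphism on a word.

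First I would fix a finite presentation of $\pi_1(M)$ adapted to the bundle structure: generators consisting of a generating set for $\pi_1(S)$ together with a stable letter $t$ mapping to a generator of $\Z$. The homomorphism $\epsilon$ then sends each $\pi_1(S)$--generator to $0$ and $t^{\pm1}$ to $\pm1$. Given an input word $w$ of length $\ell$ in these generators, one computes $\epsilon(w)$ by a single left-to-right pass, maintaining a running integer counter that is incremented or decremented each time $t^{\pm1}$ is read and left unchanged otherwise; this is manifestly linear in $\ell$ (the counter has magnitude at most $\ell$, so the arithmetic is cheap, and on a RAM model it is $O(\ell)$; even bit-complexity is $O(\ell\log\ell)$, which one may simply absorb or remark upon). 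We then declare $w\in\pi_1(S)$ precisely when the counter reads $0$. Correctness is immediate from exactness of the sequence above: $w$ represents an element of $\pi_1(S)$ if and only if its image in $\pi_1(M)/\pi_1(S)\cong\Z$ is trivial, which is exactly $\epsilon(w)=0$.

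There is essentially no hard step here; the only point requiring care is the remark that $\epsilon$ is well-defined on words rather than on group elements, i.e. that counting exponent sum of $t$ is independent of the chosen word representative—but this is immediate because $\epsilon$ is a group homomorphism whose value on each generator is specified, so it descends to $\pi_1(M)$ and a fortiori agrees on all words representing a given element. One should also note that the linear-time bound is for the membership problem as a decision problem on words in the fixed bundle-adapted generating set; for an arbitrary finite generating set one first rewrites in terms of the bundle-adapted one, which is still linear since the rewriting rules are bounded in length. I would present the proof in two short paragraphs: one setting up the exact sequence and the splitting generator $t$, and one describing the single-pass counting algorithm and invoking exactness for correctness.
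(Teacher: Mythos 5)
Your proposal is correct and follows essentially the same route as the paper's own proof: both use the short exact sequence $1\to\pi_1(S)\to\pi_1(M)\to\Z\to 1$ coming from the fibration and reduce membership to checking that the exponent sum of the stable letter vanishes, which is a single linear-time pass over the input word. Your added remarks on well-definedness and on the choice of generating set are fine elaborations of what the paper leaves implicit.
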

\begin{proof}
We have that $\pi_1(M)$ and $\pi_1(S)$ fit together in an exact sequence of the form \[1\to\pi_1(S)\to\pi_1(M)\to\Z\to 1,\] so that $g\in\pi_1(M)$ lies in $\pi_1(S)$ if and only if $g$ lies in the kernel of a certain homomorphism to $\Z$. If $\pi_1(M)$ is presented as a semidirect product of this form, the membership problem for $\pi_1(S)$ as a subgroup of $\pi_1(M)$ is clearly solvable in linear time, namely by counting the exponent sum of the stable letter of the semidirect product.
\end{proof}

Finally, we need the following general fact:

\begin{lem}\label{lem:memb-general}
Let $G$ be a group with a solvable word problem and let $H$ be a finitely generated undistorted subgroup. Then the membership problem for $H$ in $G$ is solvable. If the word problem in $G$ is solvable in at most exponential time then the membership problem for $H$ is solvable in at most exponential time.
\end{lem}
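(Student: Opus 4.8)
The plan is to exploit the fact that an undistorted finitely generated subgroup is ``visible'' at a linear scale inside $G$, so that membership can be decided by a bounded search whose length is controlled by the distortion function. First I would fix a finite generating set $S$ for $G$ and a finite generating set $T$ for $H$, and let $d_G$, $d_H$ denote the corresponding word metrics. Undistortion of $H$ means there is a constant $C>0$ with $d_H(1,h)\le C\, d_G(1,h)+C$ for all $h\in H$ (the reverse inequality is automatic). Given an input word $w$ in $S^{\pm1}$ of length $n$ representing an element $g\in G$, observe that if $g\in H$ then $d_G(1,g)\le n$, hence $d_H(1,g)\le Cn+C$; so $g$ is represented by some word in $T^{\pm1}$ of length at most $Cn+C$. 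The algorithm therefore enumerates all words $u$ in $T^{\pm1}$ of length $\le Cn+C$, and for each one uses the solvable word problem in $G$ to test whether $u=w$ in $G$. If some $u$ succeeds, output ``$g\in H$''; otherwise output ``$g\notin H$''. Correctness in both directions follows from the distortion bound: a positive answer is obviously correct, and a negative answer is correct because every element of $H$ within $d_G$-distance $n$ of the identity is hit by some enumerated word.

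For the complexity refinement, suppose the word problem in $G$ is solvable in time at most $2^{p(m)}$ for some polynomial $p$, on inputs of length $m$. The number of words $u$ of length $\le Cn+C$ in the finite alphabet $T^{\pm1}$ is at most $(2|T|)^{Cn+C+1}$, which is itself bounded by $2^{q(n)}$ for a suitable polynomial (indeed linear) $q$. Each such $u$, when converted to a word in $S^{\pm1}$ by substituting a fixed $S$-word for each generator in $T$, has length $O(n)$, so concatenating with $w^{-1}$ yields an instance of the word problem in $G$ of length $O(n)$, decidable in time $2^{p(O(n))}$. The total running time is thus at most $2^{q(n)}\cdot 2^{p(O(n))}=2^{O(\mathrm{poly}(n))}$, i.e.\ at most exponential, as claimed. (If the word problem in $G$ runs in polynomial time, the same estimate gives a singly exponential bound, which is all the statement requires.)

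The only genuinely delicate point is making sure the distortion bound is available in the precise form used: \emph{undistorted} as defined in the excerpt means the inclusion is a quasi-isometric embedding, which does give an inequality of the shape $d_H(1,h)\le C\,d_G(1,h)+C$, and this is exactly what drives the search bound. Everything else is bookkeeping: choosing the generating sets, translating between $T$-words and $S$-words through a fixed finite dictionary (which only inflates lengths by a multiplicative constant), and confirming that an exponential-time oracle for the word problem in $G$, called exponentially many times on linear-length inputs, still runs in exponential time. I expect the substitution step and the resulting length estimates to be the part most prone to off-by-a-constant errors, but there is no conceptual obstacle there.
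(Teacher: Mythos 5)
Your proposal is correct and follows essentially the same route as the paper's proof: use the quasi-isometric embedding constant to bound the $T$-length of any candidate element of $H$ by a linear function of its $S$-length, enumerate the (at most exponentially many) words of that length in the generators of $H$, and test each against the input using the word problem in $G$. The paper simply assumes $S_H\subset S_G$ in place of your substitution dictionary, and compresses the complexity bookkeeping into a remark about exponential growth rates; the content is the same.
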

\begin{proof}
Let $S_H$ and $S_G$ be finite generating sets for $H$ and $G$ respectively, where we may assume without loss of generality that $S_H\subset S_G$. Then there is a constant $C>0$ such that if $h\in H$ has length $n$ with respect to the word metric defined by $S_G$ then the length of $h$ with respect to the word metric defined by $S_H$ is at most $C\cdot n+C$.

Let $g\in G$ be given as a word of length $N$ in $S_G$. We check whether $g$ is equal to some word of length at most $C\cdot N+C$ with respect to the generating set $S_H$. Since the growth rates of $G$ and $H$ are both at most exponential, if the word problem in $G$ is solvable in at most exponential time, we can determine if $g$ is equal to an element of $H$ in a time which is at most an exponential function of $N$.
\end{proof}

We thus obtain the following corollary of the preceding discussion, which clearly implies Proposition~\ref{prop:distorted}:

\begin{prop}\label{prop:membership-detailed}
There exists a right-angled Artin group $A(\Gamma)$ containing a finitely generated free subgroup $F$ or a closed surface subgroup $\pi_1(S)$ which is exponentially distorted in $A(\Gamma)$, and such that the membership problem for this subgroup is solvable in at most exponential time.
\end{prop}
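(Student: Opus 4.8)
The plan is to assemble Proposition~\ref{prop:membership-detailed} directly from the results already laid out in this section. First I would pick a concrete platform: take any pseudo-Anosov mapping class $\psi$ of an orientable surface $S$ of negative Euler characteristic (for instance a once-punctured torus, or a closed surface of genus two), and form the mapping torus $M=M_\psi$. By Theorem~\ref{thm:surface bundle} (Thurston), $M$ is a finite volume hyperbolic $3$--manifold and the fiber inclusion $\pi_1(S)\to\pi_1(M)$ is exponentially distorted. Next I would apply Theorem~\ref{thm:agol} (Agol) to $G=\pi_1(M)$ to obtain a right-angled Artin group $A(\Gamma)$ and a finite index subgroup $G'<G$ with $G'\hookrightarrow A(\Gamma)$ an undistorted (equivalently, quasi-isometrically embedded) subgroup.

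The subgroup I want inside $A(\Gamma)$ is then $H:=G'\cap\pi_1(S)$, viewed inside $A(\Gamma)$ via the Agol embedding. I would argue as follows. Since $G'$ has finite index in $\pi_1(M)$, the intersection $H=G'\cap\pi_1(S)$ has finite index in $\pi_1(S)$, hence is itself the fundamental group of a finite cover of $S$ — a (possibly punctured) surface group, finitely generated, and of the same commensurability type. Distortion is a commensurability invariant: because $\pi_1(S)$ is exponentially distorted in $\pi_1(M)$ and $H$ is finite index in $\pi_1(S)$ while $G'$ is finite index in $\pi_1(M)$, the subgroup $H$ is exponentially distorted in $G'$. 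Composing with the undistorted (quasi-isometric) embedding $G'\hookrightarrow A(\Gamma)$ preserves the distortion function up to the usual bi-Lipschitz-plus-additive fudge, so $H$ is exponentially distorted in $A(\Gamma)$. This gives the first assertion. For the case of a free subgroup one can simply choose $S$ punctured, so that $\pi_1(S)$ and hence $H$ is free; alternatively one can pass to a suitable subgroup of $H$.

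For the complexity claim I would chain together Proposition~\ref{prop:membership} and Lemma~\ref{lem:memb-general}. The word problem in $A(\Gamma)$ is solvable in linear time (indeed Section~\ref{Complexity} records this), so \emph{a fortiori} in at most exponential time. Membership of $G'$ in $A(\Gamma)$ is decidable because $G'$ is the fundamental group of a compact special cube complex and its image is a quasiconvex — hence undistorted — subgroup, so Lemma~\ref{lem:memb-general} applies to $G'<A(\Gamma)$; alternatively $G'$ has finite index in the quasiconvex subgroup $\pi_1(M)$-image, keeping us in the undistorted regime. Membership of the fiber group: Proposition~\ref{prop:membership} shows $\pi_1(S)<\pi_1(M)$ is decidable in linear time via the exponent sum of the stable letter, and intersecting with the finite index subgroup $G'$ only costs a further decidable finite-index membership test. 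Thus, given $g\in A(\Gamma)$ as a word of length $N$: decide whether $g\in G'$ (at most exponential time by Lemma~\ref{lem:memb-general}); if so, read off its preimage in $\pi_1(M)$ and test membership in $\pi_1(S)$ and in the finite index subgroup cutting out $H$. The total running time is bounded by an exponential function of $N$, giving Proposition~\ref{prop:membership-detailed}, which immediately yields Proposition~\ref{prop:distorted}.

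The main obstacle, and the point that needs the most care, is the passage from "membership is decidable for the quasiconvex subgroups $\pi_1(M)$ and $G'$" to "membership is decidable for the \emph{distorted} fiber subgroup $H$". One must be careful that the witness length bound in Lemma~\ref{lem:memb-general}'s argument is only available for \emph{undistorted} subgroups, so the search for a short word is run against $G'$ (or $\pi_1(M)$), not against $H$; the final localization to $H$ inside $\pi_1(M)$ is then a genuinely algebraic computation in the $3$--manifold group, where the semidirect product structure $1\to\pi_1(S)\to\pi_1(M)\to\Z\to1$ and the explicit finite-index condition for $G'$ make it effective. Making the bookkeeping of these nested finite-index and distortion estimates precise — while only claiming the crude exponential bound rather than the conjectured linear one — is the delicate part; everything else is an application of the cited theorems.
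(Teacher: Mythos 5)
Your argument is correct and follows essentially the same route as the paper: Thurston plus Agol to produce an exponentially distorted fiber subgroup of a fibered hyperbolic $3$--manifold group sitting undistorted in a right-angled Artin group, then Lemma~\ref{lem:memb-general} to decide membership in the undistorted ambient $3$--manifold group, and finally the exponent sum of the stable letter (as in Proposition~\ref{prop:membership}) to detect the fiber. The only organizational difference is that the paper starts from an arbitrary finite-volume hyperbolic $M$ and passes to a single finite cover $M'$ that simultaneously fibers and embeds quasi-isometrically in $A(\Gamma)$, so the target subgroup is directly $\ker\big(\pi_1(M')\to\Z\big)$, whereas you start from a fibered $M$ and intersect the fiber with Agol's finite-index subgroup $G'$; your extra bookkeeping (that finite-index intersection preserves the surface/free type and the exponential distortion, and that $H=\ker(\phi|_{G'})$) is handled correctly, and the ``further finite-index membership test'' at the end is redundant since $g\in G'$ together with $\phi(g)=0$ already characterizes membership in $H$.
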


We remark that one can quite easily improve Proposition~\ref{prop:membership-detailed} to allow for infinitely many pairwise distinct free subgroups and surface groups, by an easy application of the Thurston norm~\cite{ThurstonMemoirs}.

\begin{proof}[Proof of Proposition~\ref{prop:membership-detailed}]
Let $M$ be a finite volume hyperbolic $3$--manifold. By Agol's proof of the Virtual Fibered Conjecture~\cite{Agol}, there is a finite cover $M'$ of $M$ which fibers over the circle, and such that $\pi_1(M')$ embeds quasi--isometrically into a right-angled Artin group $A(\Gamma)$ (cf. Theorem~\ref{thm:agol}). The fiber subgroup of $\pi_1(M')$ will be a closed surface group $\pi_1(S)$ or a finitely generated free group $F$, depending on whether $M$ is closed or not. Since the group $\pi_1(M')$ is undistorted in $A(\Gamma)$ and since the fiber subgroup is exponentially distorted in $\pi_1(M')$, the fiber subgroup is exponentially distorted in $A(\Gamma)$.

The group $\pi_1(M')$ is equipped with a homomorphism $\phi$ to $\Z$ for which the kernel is exactly the fiber subgroup. If $\{g_1,\ldots,g_k\}$ are generators for $\pi_1(M')$, the homomorphism $\phi$ is determined by an assignment of an integer to each $g_i$. To solve the membership problem for the kernel subgroup, we first apply Lemma~\ref{lem:memb-general} to determine if a given element of $A(\Gamma)$ lies in $\pi_1(M')$ and express it in terms of the generators $\{g_1,\ldots,g_k\}$. Here, we use the fact that the word problem in a right-angled Artin group is solvable in linear time.

Given an element of $g\in \pi_1(M')$ as a word in $\{g_1,\ldots,g_k\}$, we determine whether or not it lies in the fiber subgroup by adding up the values of $\phi$ on the generators occurring in an expression for $g$ in terms of $\{g_1,\ldots,g_k\}$. This latter membership problem is clearly at most linear in the length of $g$ with respect to $\{g_1,\ldots,g_k\}$. It follows then that the membership problem for the fiber subgroup is solvable in at most exponential time.
\end{proof}

\begin{conj}\label{conj:linear}
Let $A(\Gamma)$ be as in Proposition~\ref{prop:membership-detailed}. Then the membership problem for the corresponding fiber subgroup is solvable in linear time.
\end{conj}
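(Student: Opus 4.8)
The plan is to upgrade the role played by $\pi_1(M')$ from ``undistorted subgroup of $A(\Gamma)$'' to ``virtual retract of $A(\Gamma)$''. In Agol's construction (and the underlying work of Wise), $\pi_1(M')$ is the fundamental group of a compact special cube complex $C$ equipped with a local isometry $C\to S(\Gamma)$ to the Salvetti complex of $\Gamma$, and the embedding $\pi_1(M')\hookrightarrow A(\Gamma)$ of Theorem~\ref{thm:agol} is the one induced by this local isometry. By the canonical completion and retraction of Haglund--Wise~\cite{HaglundWise}, a local isometry of compact special cube complexes yields a finite cover of the target, i.e.\ a finite-index subgroup $A_0\le A(\Gamma)$, together with a retraction $\rho\colon A_0\twoheadrightarrow \pi_1(M')$. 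Both $A_0$, presented as a finite coset graph (a finite cover of $S(\Gamma)$), and $\rho$, presented as a combinatorial map of cube complexes sending each Schreier generator of $A_0$ to a fixed word in the generators of $\pi_1(M')$, are explicit finite data depending only on $\Gamma$, $M$ and the choice of $M'$.

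Granting this, here is the algorithm on an input word $g$ of length $N$ in the vertex generators of $A(\Gamma)$. (i) Trace the edge path labelled by $g$ in the finite coset graph of $A_0$; this is a streaming operation costing a bounded number of table look-ups per letter, hence $O(N)$, and by Reidemeister--Schreier it simultaneously rewrites $g$ as a word $w$ of length $O(N)$ in the Schreier generators of $A_0$. If the path does not return to the base vertex then $g\notin A_0\supseteq\pi_1(S)$ and we reject. (ii) Apply $\rho$ letter by letter to $w$: since each Schreier generator has a fixed $\rho$-image, $\rho(w)=\rho(g)$ is a word $u$ of length $O(N)$ in the generators of $\pi_1(M')$, which we re-express, again with bounded blow-up, as a word $\tilde u$ of length $O(N)$ in the vertex generators of $A(\Gamma)$. (iii) Decide whether $g\in\pi_1(M')$: because $\rho$ is a retraction, this holds if and only if $\rho(g)=g$, i.e.\ if and only if $\tilde u\,g^{-1}=_{A(\Gamma)}1$, which is an instance of the word problem in $A(\Gamma)$ on a word of length $O(N)$ and hence solvable in linear time by Liu--Wrathall--Zeger~\cite{LWZ}. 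If it fails we reject, since $\pi_1(S)\subseteq\pi_1(M')$. (iv) Otherwise $g\in\pi_1(M')$, so $\phi(g)=\phi(\rho(g))=\phi(u)$; evaluate the fibration homomorphism $\phi\colon\pi_1(M')\to\Z$ on $u$ by summing the (absolutely bounded) integers it assigns to the generators occurring in $u$, in time $O(N)$, and accept if and only if this sum is $0$, i.e.\ if and only if $g\in\ker\phi=\pi_1(S)$. A short case check shows the algorithm is correct, and every step has been arranged to cost $O(N)$.

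The main obstacle is not correctness but complexity bookkeeping: one must verify that each of the passes above is genuinely a single linear-time streaming pass with bounded blow-up, and in particular that no step secretly requires the shortlex normal form in $A(\Gamma)$, which is only known to be computable in quadratic time. Concretely, the coset-graph traversal, the Schreier rewriting, and the substitution $\rho$ must all be implemented by reading the input once and appending bounded-length blocks, and the sole genuine group computation --- testing $\tilde u\,g^{-1}=_{A(\Gamma)}1$ --- must be handed to the linear-time word-problem solver rather than to any normal-form routine. A secondary technical point, already implicit in the work of Wise and Agol, is the cusped case: there one needs that $\pi_1(M')$ is itself the fundamental group of a \emph{compact} special cube complex admitting a local isometry to $S(\Gamma)$, so that the Haglund--Wise retraction applies; this is precisely the (virtual) compact-special cubulation of finite-volume hyperbolic $3$--manifold groups. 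With these points in place, the construction yields a linear-time solution of the membership problem for the fiber subgroup, establishing Conjecture~\ref{conj:linear}.
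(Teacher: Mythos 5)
There is an important mismatch of expectations here: the paper does not prove this statement at all. It is stated as Conjecture~\ref{conj:linear}, and the authors explicitly say that ``the details are likely to be quite involved''; the only bound they actually establish is the exponential one of Proposition~\ref{prop:membership-detailed}, obtained by brute-force enumeration of all $S_H$--words of length at most $C\cdot N+C$ via Lemma~\ref{lem:memb-general}. So your proposal cannot be ``the same as the paper's proof'' --- you are supplying an argument where the paper supplies none.

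That said, your strategy is a genuine and, as far as I can see, essentially sound route to the conjecture, and it is strictly stronger than what the paper does. The key move is to replace the paper's use of mere undistortedness of $\pi_1(M')$ (which only yields the exponential search of Lemma~\ref{lem:memb-general}) by the virtual retract property coming from the Haglund--Wise canonical completion and retraction applied to the local isometry $C\to S(\Gamma)$ underlying Agol's embedding. Once $\pi_1(M')$ is exhibited as a retract of a finite-index subgroup $A_0\le A(\Gamma)$, membership in $\pi_1(M')$ reduces to a single instance $\tilde u\,g^{-1}=_{A(\Gamma)}1$ of the linear-time word problem of Liu--Wrathall--Zeger, and membership in the fiber subgroup then follows by evaluating $\phi$ on $u$, exactly as in the paper's step (iv). The logical skeleton (first decide membership in $\pi_1(M')$, then test $\phi=0$) is the same as the paper's; the improvement is entirely in the first step. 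The points that genuinely need care, and which you correctly flag, are: (a) that the embedding of Theorem~\ref{thm:agol} really is induced by a local isometry from a \emph{compact} special cube complex --- in the cusped case this requires Wise's compact special cubulation, not just Agol's theorem for closed manifolds; (b) that $A_0$, its coset table, the Schreier rewriting, and the images of the Schreier generators under $\rho$ are all fixed finite preprocessing data (legitimate, since the subgroup is fixed in the statement); and (c) that the equivalence $g\in\pi_1(M')\Leftrightarrow\rho(g)=g$ is tested by the word-problem solver rather than by the quadratic-time shortlex normal form. None of these appears to be an actual obstruction, so modulo a careful write-up of the canonical completion input, your argument would settle the conjecture; you should make explicit in such a write-up that the constant $C$ in all the ``bounded blow-up'' claims depends only on the fixed data $(\Gamma, M, M', A_0, \rho)$ and not on the input word.
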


Conjecture~\ref{conj:linear} seems highly plausible, though the details are likely to be quite involved.
We remark that (as we alluded above,) a positive resolution of Conjecture
~\ref{conj:linear} would provide a new platform for the cryptosystems proposed in~\cite{CKL}, especially the secure version of protocol I
therein. The right-angled Artin group would be a public group, and the fiber subgroup would be secret. An eavesdropper would be hindered
greatly by the fact that the fiber subgroup is exponentially distorted in trying to discover any message that maybe transmitted between
the parties Alice and Bob, whereas Bob would be able to efficiently check if a group element sent to him by Alice lies in the fiber subgroup.

\section{Right-angled Artin groups and Cryptography}
\label{crypto}

In this section we survey some applications of the previous discussion to cryptography.

\subsection{Early cryptosystems using right-angled Artin groups and partially commutative monoids}

The first proto-cryptosystem based on groups was proposed by Wagner-Magyarik in \cite{WM}. It was based on a group $\langle X\textrm{ } | \textrm{ }R\rangle$ for which the word choice problem was hard, but such that after adding some further set of relations $S$ to $R$, the word choice problem becomes easy. In their formulation, they proposed $S$ to be such that $\langle X\textrm{ } | \textrm{ }R\cup S\rangle$ was a right-angled Artin group, as both the word problem and the word choice problem in these groups is linear. Later, Birget-Magliveras-Sramka \cite{BMS} proposed what is sometimes considered to be the first true group-based cryptosystem. Their system is based on a group closely related to the Higman-Thompson groups, though the approach also works for the Higman-Thompson group $G_{3,1}$ and for finite symmetric groups. This cryptosystem makes use of neither an additional set of generators $S$, nor of right-angled Artin groups. The original scheme of Wagner-Magyarik was turned into a true cryptosystem by Levy-Perret \cite{LP}, but using partially commutative monoids instead of groups as a platform. This new scheme was not vulnerable to a reaction attack, which the original proto-cryptosystem was.

\subsection{Authentication schemes}
Based on the work \cite{Grigoriev-Shpilrain}, the first two authors proposed authentication schemes using right-angled Artin groups as a platform~\cite{FK16}. In particular, the two authentication schemes were respectively based on the group homomorphism problem and the subgroup isomorphism problem.

\subsection{Secret sharing schemes}
Many different secret sharing schemes can be constructed using right-angled Artin groups as a platform. Here we describe some representatives.

\subsubsection{The graph decomposition problem}
\label{CryptoDec}
One can build many cryptoschemes which exploit efficient solutions to the decomposition problem, as outlined in Section~\ref{DecProblem}. Since the decomposition problem is efficiently solvable, it is suitable for secret sharing schemes. Here, we record a very simple such scheme, which could serve as the core of a more elaborate secret sharing scheme. One of the novelties of this scheme is that the object being transmitted is a graph and therefore has an intrinsic geometric structure, as opposed to being merely an algebraic datum such as an integer or vector.

As for some technical details, a finite graph can be presented to a computer as a finite list of vertices, and then a finite list of pairs of vertices or as adjacency matrices. The transmission in the scheme below should be done over a secure channel, though more sophisticated versions could avoid this limitation.

The dealer $D$ distributes to each of $n$ participants $P_i\in \{P_1,\ldots,P_n\}$ a connected finite simplicial graph $\Gamma_i$ over secure channel. Each participant computes the number $m_i$ of join factors in a maximal join decomposition of $\Gamma_i$. The participant $P_i$ now knows the value of an unknown monic polynomial $f$ of degree exactly $n$, which satisfies $f(i)=m_i$. The secret is the value $f(0)$.

To make the secret sharing scheme above more algebraic, the dealer may instead deal a right-angled Artin group $A(\Gamma_i)$. The participant $P_i$ computes a bit $b_i$, which takes on the value $0$ if the right-angled Artin group decomposes as a nontrivial direct product, and $1$ otherwise. The unknown polynomial $f$ satisfies $f(i)=b_i$, and the secret is $f(0)$.

The threshold of the previous two secret sharing schemes can easily be changed by tampering with the degree of the monic polynomial $n$, using similar ideas of Shamir's scheme \cite{Shamir} and Lagrange's interpolation.

\subsubsection{The Word Problem}
In \cite{HKS}, Habeeb, Shpilrain and the second author have proposed a secret sharing scheme using the word problem and group presentation.  In \cite{FK16}, the first and second authors proposed right-angled Artin groups as platform for this secret sharing scheme, which is feasible since the word problem in such groups is in linear time.

\subsection{Symmetric key exchanges}
\label{IndiraDel}
In~\cite{CKL}, Chatterji, Lu and the second author proposed several cryptosystems which are based on groups whose geodesic and subgroup membership problems are solvable in polynomial time. This cryptosystem can be modified in a completely straightforward way to use right-angled Artin groups as a platform. The relevant decision problems would come from Proposition~\ref{prop:membership-detailed}, and an efficient cryptosystem (i.e. one which can be implemented in polynomial time) can be constructed assuming Conjecture~\ref{conj:linear}.

\subsection{Connections to braid groups}
In~\cite{HKMPPQ}, a practical cryptanalysis of WalnutDSA was proposed, a platform which was given in 2016 by~\cite{AAGG} as a post-quantum cryptosystem using braid groups and conjugacy search problem. Right-angled Artin groups and braid groups are intimately connected. For instance, Kim and Koberda proved that every right-angled Artin group is a subgroup of some braid group in a way which is undistorted, and hence such an embedding is advantageous with respect to the preservation of algorithmic properties~\cite{KK15}.

\section{Open Problems}
\label{open}
In this last section we will discuss some perspectives of future work for right-angled Artin groups. First we will translate some classical algorithmic problems in graph theory to the groups, and then we will state some cryptography questions in this context.

\subsection{A dictionary between graph and group problems}
\label{GraphGroups}

The following are classical graph theoretic problems for which admit algebraic counterparts in the context of right-angled Artin groups. Note that suitable versions of the automorphism problem and of the decomposition problem, which are respectively solvable in quasi-polynomial time and in polynomial time, were described respectively in Section~\ref{AutProblem} and Section~\ref{DecProblem} above.

\vspace{.5cm}

\subsubsection{The clique problem} A \emph{k-clique} is a complete graph on $k$ vertices. Given a graph $\Gamma$, the clique problem consists in finding the induced subgraphs of $\Gamma$ which are $k$-cliques for a certain $k$. If $A(\Gamma)$ is the associated right-angled Artin group, this corresponds to finding the subsets of the
 Artin generators which give rise to free abelian subgroups of rank $k$. There are many different variations on this problem, for instance: find a maximal (with respect to inclusion) $k$-clique inside $\Gamma$; list all $k$-cliques; test if, given $k$, there exists a $j$-clique in $\Gamma$ for some $j>k$. The first of these problems is generally fixed-parameter intractable, the second is solvable in exponential time, and the third is NP-hard. Many algorithms dealing with particular cases have been proposed (see~\cite{APR99}).

\vspace{.1cm}

 \subsubsection{The independent set problem} An \emph{independent set} inside a graph $\Gamma$ is a set of vertices of $\Gamma$ such that there is no edge in the subgraph of $\Gamma$ which they span. The independent set problem consists in finding a maximal (with respect to inclusion) independent set inside the graph $\Gamma$. This problem is known to be NP-complete. Its counterpart in right-angled Artin group theory is finding a maximal subset of the set of Artin generators that generate a free group. As in the case of the clique problem (which in a rough sense is dual to this one), there are many natural variations on this problem. Again there is an extensive literature on the topic, especially for the case of sparse graphs~\cite{Ro86}.

 \vspace{.1cm}

 \subsubsection{Induced graphs} The previous questions belong to a large family of problems concerning induced graphs by subsets the set of vertices of a graph. In the context of right-angled Artin groups, such questions correspond to questions about the subgroups generated by subsets of the set of Artin generators.
 One such interesting problem is the \emph{induced subgraph isomorphism problem}, which given a pair of graphs $\Gamma$ and $\Gamma'$, consists of finding a subgraph of $\Gamma$ which is isomorphic to $\Gamma'$. In terms of right-angled Artin groups, this is equivalent to determining whether $A(\Gamma')<A(\Gamma)$, where this inclusion is of \emph{standard subgroups}, i.e. ones generated by subgraphs of the defining graph of $\Gamma$. Observe that in the special case $\Gamma'$ is a clique, we obtain the clique problem, whence we conclude that this problem is in general NP-complete. In the case where $\Gamma'$ has no edges, we obtain the independent set problem. Another interesting special case is the \emph{snake-in-the-box problem}, in which $\Gamma'$ is a chain (i.e a connected
 graph with two vertices of degree one and the remaining vertices of degree two) and in which $\Gamma$ is an hypercube (see~\cite{KOSU}).

 \vspace{.1cm}

 \subsubsection{Subdivision problems} Recall that given a graph $\Gamma$, a subdivision of $\Gamma$ is a graph obtained performing successive subdivisions in the edges. The problem in this case is, given graphs $\Gamma$ and $\Gamma'$, to find $\Gamma'$ as a subgraph of a subdivision of $\Gamma$. In the context of
 right-angled Artin groups, to subdivide edges of $A(\Gamma)$ is to add generators to the original set of generators $S$ in such a way that the generator added in every step
 commutes with exactly two of the existent commuting generators $v$ and $w$ in the previous step, and with the commutativity relation between $v$ and $w$ canceled. In these terms, the subdivision problem for right-angled Artin groups can be stated as follows: given
 such two groups with vertex generators $S$ and $S'$, decide if the second group is a subgroup of the group obtained adding generators and relations to $S$ by the previous process. Classical results such as Kuratowski's Theorem can also be formulated in this context. See \cite{BF}.

  \vspace{.1cm}

 \subsubsection{Graph coloring} A \emph{(vertex) coloring} of a graph is an assignment of colors to the vertices of a graph in such a way that no two adjacent vertices are assigned the same color. The \emph{chromatic number} of a graph is the minimum number of colors that are necessary to color the graph.  Other variations include the dual \emph{edge coloring}, an assignment of colors to the edges of a graph in such a way that no two edges incident to a common vertex are assigned the same color. The chromatic number for edges is defined as in the previous case, and in the context of right-angled Artin groups, can be interpreted in the following way. Let $F$ be a free group on a set $S$ of generators, $C$ a set of commutators of the elements of $S$, and $G=F/\langle\langle C\rangle\rangle$ the corresponding right-angled Artin group. Now if $C_1\ldots,C_m$ is a partition of $C$, the chromatic number associated to $G$ is the minimum cardinal of a partition such that it does not contain two commutators of the form $[s_0,s_1]$ and $[s_0,s_2]$. The edge coloring problem is known to be NP-complete, even in the case where one wants to decide if a graph is colorable with at most three colors~\cite{Hol81}. The papers~\cite{KK13} and~\cite{KK14} by Kim and the third author investigate the relationship between chromatic numbers and right-angled Artin groups.

  \vspace{.1cm}

 \subsubsection{Vertex cover problem} Given a graph $\Gamma$, a \emph{vertex cover} is a subset $V'$ of the set of vertices $V$ such that every edge of $\Gamma$ is incident with at least one vertex of $V'$. Given a certain $k>0$, the \emph{vertex cover problem} is the problem of deciding if there exists a vertex cover of $\Gamma$ with exactly $k$ vertices. This problem is known to be NP-complete, even for planar graphs, although it is fixed-parameter tractable. A natural variation is the minimum vertex cover problem, which consists of finding the minimum $k$ for which there exists a vertex cover with $k$ vertices. This problem is known to be NP-hard. In the context of right-angled Artin groups, the vertex cover problem for a fixed $k$ is equivalent to deciding whether there exists a subset $S'\subseteq S$ of a set of vertex generators consisting of $k$ generators such that any generator in $S\backslash S'$ commutes with at least one of the generators in $S'$. The minimum vertex cover problem has an analogous interpretation \cite{GJ}.

  \vspace{.1cm}

 \subsubsection{Arboricity} Recall that a \emph{forest} is an acyclic graph, i.e. a graph whose connected components are trees.  The  \emph{arboricity} of a graph $\Gamma$ is defined as the minimum number $k$ such that there exists $k$ subgraphs of $\Gamma$ that are forests and whose union contains all the edges of $\Gamma$. The problem of arboricity is to find such a $k$, and can be solved in polynomial time. For right-angled Artin groups, given a set of vertex generators $S$, the arboricity can translated in the following way. Given a natural number $k$, consider a collection $C$ of non-empty subsets $S_i\subseteq S$, for $1\leq i\leq k$, such that $S_m\neq S_n$ for $1\leq m<n\leq k$ and $\bigcup_{1\leq i\leq k}S_i=S$. Assume that the following condition does not hold for any $S_i$: there exists a subset $\{x_0,\ldots ,x_{n_i}\}\subseteq S_i$, $n_i\geq 3,$ such that the commutators $[x_l,x_{l+1}]$ are trivial for every $0\leq l\leq n_i$, where the subscripts are taken modulo $n_i+1$. Equivalently, the vertex generators in $S_i$ do not generate a right-angled Artin group on a cycle. Then the smallest $k$ for which such a collection $C$ exists will be the \emph{arboricity} of the right-angled Artin group. See \cite{GW}.

 \subsection{Open problems}

We close with some open problems in cryptography:
\begin{enumerate}
\item What other graph theoretic problems can be translated into right-angled Artin group theory in a way which yields interesting complexity results and the possibility for new platforms for cryptosystems?

\item Can one find a secret sharing scheme using the decomposition problem in right-angled Artin groups which transmits over a public channel?

\item In \cite{KK2003}, a secret sharing scheme has been proposed using graph coloring. Can the platform be modified to use right-angled Artin groups?
\item In \cite{K98}, the Polly Cracker public key cryptosystem is proposed. It is based on graph 3-coloring problem, which is known to be an NP-hard problem. Can this cryptosystem be modified to use right-angled Artin groups as a platform?
\end{enumerate}

\section*{Acknowledgements}

The authors thank S. Kim and A. Sale for helpful comments and corrections. The authors are indebted to an anonymous referee who read
the manuscript very carefully and provided a large number of helpful comments and corrections which greatly improved the paper.

\vspace{.5cm}

Ram\'{o}n Flores is supported by FEDER-MEC grant MTM2016-76453-C2-1-P. Delaram Kahrobaei is partially supported by a PSC-CUNY grant from the CUNY Research Foundation, the City Tech Foundation, and ONR (Office of Naval Research) grant N00014-15-1-2164. Thomas Koberda is partially supported by an Alfred P. Sloan Foundation Research Fellowship and by NSF Grant DMS-1711488. We thank International Center for Mathematical Sciences (ICMS), Edinburgh, which made this collaboration possible as well as the NSF grant which supported DK's and TK's travels. We acknowledge the Institut Henri Poincar\'e (IHP) for the NSF grant DMS-1700168 travel grant for DK, during the program on Analysis for Quantum Information Theory.

\end{document}